\newtheorem*{theorem*}{Theorem}
\newtheorem{theorem}{Theorem}[section]
\newtheorem{corollary}[theorem]{Corollary}
\newtheorem{claim}[theorem]{Claim}
\newtheorem{problem}[theorem]{Problem}
\newtheorem{observation}[theorem]{Observation}
\newtheorem{remark}[theorem]{Remark}
\theoremstyle{definition}
\newtheorem{definition}[theorem]{Definition}
\DeclareMathOperator{\rev}{\mathrm{rev}}
\DeclareMathOperator{\Id}{\mathrm{Id}}
\DeclareMathOperator{\proj}{\mathrm{proj}}
\title{On $\MakeLowercase{d}$-permutations and pattern avoidance classes}
\author{Nathan Sun} \address{Department of Applied Mathematics, Harvard University, Cambridge, MA 02138} \email{nsun@college.harvard.edu}
\date{April 21, 2024}
\subjclass[2020]{Primary: 05A05}
\keywords{Permutations, d-permutations, pattern avoidance}
\begin{document}
\maketitle

\begin{abstract}
Multidimensional permutations, or $d$-permutations, are represented by their diagrams on $[n]^d$ such that there exists exactly one point per hyperplane $x_i$ that satisfies $x_i= j$ for $i \in [d]$ and $j \in [n]$. Bonichon and Morel previously enumerated $3$-permutations avoiding small patterns, and we extend their results by first proving four conjectures, which exhaustively enumerate $3$-permutations avoiding any two fixed patterns of size $3$. We further provide a enumerative result relating $3$-permutation avoidance classes with their respective recurrence relations. In particular, we show a recurrence relation for $3$-permutations avoiding the patterns $132$ and $213$, which contributes a new sequence to the OEIS database. We then extend our results to completely enumerate $3$-permutations avoiding three patterns of size $3$.

\end{abstract}

\section{Introduction}
Starting with Knuth's \cite{knuth1973art} work on permutations in 1973, the field of pattern avoidance has been well-studied in enumerative combinatorics. Simion and Schmidt first considered pattern avoidance in their work on enumerating permutation avoidance classes in 1985 \cite{simion1985restricted}. Pattern avoidance can be defined as follows:
\begin{definition}
Let $\sigma \in S_{n}$ and $\pi \in S_{k}$, where $k \leq n$. We say that the permutation $\sigma$ \emph{contains} the pattern $\pi$ if there exists indices $c_1 < \dots < c_k$ such that $\sigma(c_1)  \cdots \sigma(c_k)$ is order-isomorphic to $\pi$. We say a permutation \emph{avoids} a pattern if it does not contain it.
\end{definition}

Permutations avoiding sets of small patterns have been exhaustively enumerated \cite{simion1985restricted, mansour2020enumeration, knuth1973art}. It is well-known that permutations avoiding certain patterns are in bijection with other combinatorial objects, such as Dyck paths \cite{krattenthaler2001permutations, reifegerste2003diagram} and maximal chains of lattices \cite{simion1985restricted}. Some of them are further enumerated by the Catalan and Schr{\"o}der numbers \cite{west1995generating}. In their work, Simion and Schmidt \cite{simion1985restricted} completely enumerated permutations avoiding any single pattern, two patterns, or three patterns of size $3$, paving the path for more work in the field of pattern avoidance.

More recently, Bonichon and Morel \cite{bonichon2022baxter} considered a multidimensional generalization of a permutation, called a $d$-permutation, which resembles the structure of a $(d-1)$-tuple of permutations. Tuples of permutations have been studied before \cite{gunby2019asymptotics, aldred2005permuting}, but $d$-permutations have not been thoroughly studied yet, mainly appearing in a few papers related to separable permutations \cite{gunby2019asymptotics, asinowski2010separable}. In particular, Asinowski and Mansour \cite{asinowski2010separable} presented a generalization of separable permutations that are similar to $d$-permutations and characterized these generalized permutations with sets of forbidden patterns. The $d$-permutations studied by Bonichon and Morel coincide with the one introduced by Asinowski and Mansour \cite{asinowski2010separable} for the multidimensional case but also coincide with the classical permutation for $d=2$.

Similar to the enumeration Simion and Schmidt \cite{simion1985restricted} did in 1985 and Mansour \cite{mansour2020enumeration} in 2020, Bonichon and Morel \cite{bonichon2022baxter} started the enumeration of $d$-permutations avoiding small patterns and made many conjectures regarding the enumeration of $3$-permutations avoiding sets of two patterns. We present two main classes of results regarding the enumeration of $3$-permutation avoiding small patterns. We first completely enumerate $3$-permutations avoiding classes of two patterns of size $3$ and prove their respective recurrence relations, solving the conjectures presented by Bonichon and Morel \cite{bonichon2022baxter}. Further, we derive a recurrence relation for $3$-permutations avoiding $132$ and $213$, whose sequence we added to the OEIS database \cite{oeis}, and Bonichon and Morel did not provide any conjecture. We then further initiate and completely enumerate $3$-permutations avoiding classes of three patterns of size $3$, similar to Simion and Schmidt's results in 1985 \cite{simion1985restricted}. 

This paper is organized as follows. In Section 2, we introduce preliminary definitions and notation. In Section 3, we completely enumerate sequences of $3$-permutations avoiding two patterns of size 3 and prove four conjectures of Bonichon and Morel \cite{bonichon2022baxter}. In addition, we prove a recurrence relation for an avoidance class whose sequence we added to the OEIS database \cite{oeis}, completing our enumeration. In Section 4, we extend our enumeration to $3$-permutations avoiding three patterns of size 3 and prove recurrence relations for their avoidance classes. We conclude with open problems in Section 5.

\section{Preliminaries} \label{sec:preliminaries}

Let $S_n$ denote the set of permutations of $[n] = \{ 1,2, \dots, n \}$. Note that we can represent each permutation $\sigma \in S_n$ as a sequence $a_1 \cdots a_n$. Further, let $\mathrm{Id}_n$ denote the identity permutation $12 \cdots n$ of size $n$ and given a permutation $\sigma = a_1 \cdots a_n \in S_n$, let $\rev(\sigma)$ denote the reverse permutation $a_n \cdots a_1$. We further say that a sequence $a_1 \cdots a_n$ is \emph{consecutively increasing} (respectively \emph{decreasing}) if for every index $i$, $a_{i+1} = a_i+1$ (respectively $a_{i+1} = a_i-1$). 

For a sequence $a = a_1 \cdots a_n$ with distinct real values, the \emph{standardization} of $a$ is the unique permutation of $[n]$ with the same relative order. Note that once standardized, a consecutively-increasing sequence is the identity permutation and a consecutively-decreasing sequence is the reverse identity permutation. Moreover, we say that in a permutation $\sigma = a_1 \cdots a_n$, the elements $a_i$ and $a_{i+1}$ are \emph{adjacent} to each other. More specifically, $a_i$ is \emph{left-adjacent} to $a_{i+1}$ and similarly, the element $a_{i+1}$ is $\emph{right-adjacent}$ to $a_i$. The following definitions in this section were introduced in \cite{bonichon2022baxter}.

\begin{definition}
A \emph{$d$-permutation} $\boldsymbol{\sigma} := (\sigma_1 , \dots, \sigma_{d-1})$ of size $n$ is a tuple of permutations, each of size $n$. Let $S_{n}^{d-1}$ denote the set of $d$-permutations of size $n$. We say that $d$ is the \textit{dimension} of $\boldsymbol{\sigma}$. Moreover, the \textit{diagram} of $\sigma$ is the set of points $(i, \sigma_1(i) ,\dots, \sigma_{d-1}(i))$ for all $i \in [n]$.
\end{definition}

Note that the identity permutation is implicitly included in the diagram of a $d$-permutation, which justifies why a $d$-permutation is a $(d-1)$-tuple of permutations. For a $d$-permutation $\boldsymbol{\sigma} = (\sigma_1 ,\dots, \sigma_{d-1}),$ let $\boldsymbol{\Bar{\sigma}} = (\mathrm{Id}_n, \sigma_1, \dots, \sigma_{d-1}).$ Further, with this definition, it is natural to consider the projections of the diagram of a $d$-permutation, which is useful in defining the notion of pattern avoidance for $d$-permutations.

\begin{definition}
Given $d' \in \mathbb N$ and $\boldsymbol{i} = i_1, \dots, i_{d'} \in [d]^{d'}$, the \emph{projection on $\boldsymbol{i}$} of some $d$-permutation $\boldsymbol{\sigma}$ is the $d'$-permutation $\mathrm{proj}_{\boldsymbol{i}}(\boldsymbol{\sigma}) = (\boldsymbol{\Bar{\sigma}}_{i_2} \circ \boldsymbol{\Bar{\sigma}}_{i_1}^{-1}, \dots, \boldsymbol{\Bar{\sigma}}_{i_{d'}} \circ \boldsymbol{\Bar{\sigma}}_{i_1}^{-1} ).$
\end{definition}

We say that a projection is \emph{direct} if $i_1 < \dots < i_{d'}$ and \emph{indirect} otherwise.

\begin{remark}
There are only three direct projections of dimension $2$ of a $3$-permutation $\boldsymbol{\sigma} = (\sigma, \sigma')$. Namely, they are $\sigma$, $\sigma'$, and $\sigma' \circ \sigma^{-1}$.
\end{remark}

In the remainder of the section, we use the projection of a $3$-permutation $\boldsymbol{\sigma} = (\sigma, \sigma')$ to refer to the projection $\sigma' \circ \sigma^{-1}$. As such, we will also refer to $\sigma' \circ \sigma^{-1}$ as $\proj(\sigma, \sigma')$ for ease of notation.

In this paper, we will also consider the projection $\pi' \circ \pi^{-1}$ for subpermutations $\pi, \pi'$. We write $\rho = \pi' \circ \pi^{-1}$ to mean the composition of the standardization of $\pi$ and $\pi'$,  rewritten to be the elements of $\pi'$ with the same relative order. For example, if $\pi = 5467$ and $\pi' = 2365$, then $\pi' \circ \pi^{-1}$ is calculated as following. We first standardize $\pi$ and $\pi'$ to be $2134$ and $1243$, respectively. Then the composition of these two permutations is $2143$. Finally, we rewrite the permutation to be the elements of $\pi'$ with the same order as $2143$, giving $3265$ to be the final projection.

Using direct projections, Bonichon and Morel \cite{bonichon2022baxter} introduced the following definition of pattern avoidance, which is consistent with the existing concept of pattern avoidance for regular permutations.

\begin{definition}
Let $\boldsymbol{\sigma} = (\sigma_1 ,\dots, \sigma_{d-1}) \in S_{n}^{d-1}$ and $\boldsymbol{\pi} = (\pi_1, \dots, \pi_{d'-1}) \in S_{k}^{d'-1}$, where $k \leq n$. We say that the $d$-permutation $\boldsymbol{\sigma}$ \emph{contains} the pattern $\boldsymbol{\pi}$ if there exists a direct projection $\boldsymbol{\sigma'}$ of dimension $d'$ and indices $c_1 < \dots < c_k$ such that $\boldsymbol{\sigma'}_i(c_1)  \cdots \boldsymbol{\sigma'}_i(c_k)$ is order-isomorphic to $\pi_i$ for all $i$. We say a $d$-permutation \emph{avoids} a pattern if it does not contain it.
\end{definition}

For example, the $3$-permutation $(4231, 2413)$ avoids the pattern $123$ because neither the permutations $4231$, $2413$, nor the projection $2413 \circ 4231^{-1} = 3412$ contains an occurrence of $123$. Furthermore, note that the $3$-permutation $(1432, 3124)$ contains the pattern $231$, because despite $1432$ and $3124$ avoiding an occurrence of $231$, the projection $3124 \circ 1432^{-1} = 3421$ has an occurrence of $231$.

Given $m$ patterns $\boldsymbol{\pi_1}, \dots, \boldsymbol{\pi_m}$, we write $S_{n}^{d-1}(\boldsymbol{\pi_1}, \dots, \boldsymbol{\pi_m})$ to mean the set of $d$-permutations of size $n$ that simultaneously avoid $\boldsymbol{\pi_1}, \dots, \boldsymbol{\pi_m}$. 

Bonichon and Morel \cite{bonichon2022baxter} also noted symmetries on $d$-permutations that correspond to symmetries on the $d$-dimensional cube. In particular, these symmetries are counted by signed permutation matrices of dimension $d$. Such a signed permutation matrix is a square matrix with entries consisting of $-1, 0$, or $1$ such that each row and column contain exactly one nonzero element. We call $\textit{d-Sym}$ the set of such signed permutation matrices of size $d$.

This allows us to extend the well-known definitions of Wilf-equivalence and trivial Wilf-equivalence to higher dimensions.

\begin{definition}
We say that two sets of patterns $\boldsymbol{\pi_1}, \dots, \boldsymbol{\pi_k}$ and $\boldsymbol{\tau_1}, \dots, \boldsymbol{\tau_\ell}$ are \emph{d-Wilf-equivalent} if $|S_{n}^{d-1}(\boldsymbol{\pi_1}, \dots, \boldsymbol{\pi_k})| = |S_n^{d-1}(\boldsymbol{\tau_1}, \dots, \boldsymbol{\tau_\ell})|$. Moreover, these patterns are \emph{trivially d-Wilf-equivalent} if there exists a symmetry $s \in \textit{d-Sym}$ that maps $S_{n}^{d-1}(\boldsymbol{\pi_1}, \dots, \boldsymbol{\pi_k})$ to $S_n^{d-1}(\boldsymbol{\tau_1}, \dots, \boldsymbol{\tau_\ell})$ bijectively.
\end{definition}

In the following sections, we will only work with $3$-permutations avoiding $2$-permutations.

\section{Enumeration of Pattern Avoidance Classes of at most size 2}
\label{sec:enumeration}

Bonichon and Morel \cite{bonichon2022baxter} proposed the problem of enumerating sequences of $3$-permutations avoiding at most two patterns of size 2 or 3. They provided Table \ref{double avoidance}, conjecturing the recurrences in the last four rows and leaving the remainder as open problems.

\begin{table}[htp]
    \centering
    \begin{tabular}{|c | c | c | c | c|} 
 \hline
 Patterns & \#TWE & Sequence & OEIS Sequence & Comment \\ [0.5ex] 
 \hline\hline
 12 & 1 & $1,0,0,0,0,\dots$ & & \cite{bonichon2022baxter} \\ 
 \hline
 21 & 1 &  $1,1,1,1,1,\dots$ & & \cite{bonichon2022baxter} \\
 \hline
 123 & 1 & $1,4,20,100,410,1224,2232, \dots$ & & Not in OEIS \\
 \hline
 132 & 2 & $1,4,21,116,646,3596,19981, \dots$ & & Not in OEIS \\
 \hline
 231 & 2 & $1,4,21,123,767,4994,33584, \dots$ & & Not in OEIS \\
 \hline
 321 & 1 & $1,4,21,128,850,5956,43235, \dots$ & & Not in OEIS \\
 \hline
 $123, 132$ & 2 & $1,4,8,8,0,0,0, \dots$  & & Terminates after $n=4$ \\
 \hline
 $123, 231$ & 2 & $1,4,9,6,0,0,0, \dots$ & & Terminates after $n=4$ \\
 \hline
 $123, 321$ & 1 & $1,4,8,0,0,0,0, \dots$ & & Terminates after $n=3$ \\
 \hline
 $132, 213$ & 1 & $1,4,12,28,58,114,220, \dots$ & \href{http://oeis.org/A356728}{A356728} & Theorem \ref{132,213} \\
 \hline
 $132, 231$ & 4 & $1,4,12,32,80,192,448, \dots$ & \href{http://oeis.org/A001787}{A001787} & Theorem \ref{132,231} \\
 \hline
 $132, 321$ & 2 & $1,4,12,27,51,86,134, \dots$ & \href{http://oeis.org/A047732}{A047732} & Theorem \ref{132,321} \\
 \hline
 $231, 312$ & 1 & $1,4,10,28,76,208,568, \dots$ & \href{http://oeis.org/A026150}{A026150} & Theorem \ref{231,312} \\
 \hline
 $231, 321$ & 2 & $1,4,12,36,108,324,972, \dots$ & \href{http://oeis.org/A003946}{A003946} & Theorem \ref{231,321} \\ 
 \hline
\end{tabular}
    \caption{Sequences of $3$-permutations avoiding at most two patterns of size 2 or 3. The second column indicates the number of trivially Wilf-equivalent classes.}
    \label{double avoidance}
\end{table}

In all of the following theorems, we take constructive approaches to prove recurrence relations. Given an element $\boldsymbol{\sigma}$ in $S_n^2(\pi_1, \pi_2)$, we attempt to construct elements in $S_{n+1}^2(\pi_1, \pi_2)$ via inserting the maximal element $n+1$ into the permutations in $\boldsymbol{\sigma}$. Note that if a permutation $\sigma \in S_n$ contains a pattern $\pi$, then adding the maximal element $n+1$ anywhere into $\sigma$ still contains $\pi$. Similarly, if a permutation $\sigma \in S_n$ avoids a pattern $\pi$, then removing the maximal element $n$ from $\sigma$ will still avoid~$\pi$. 

However, it should be noted that it is possible to have a $3$-permutation $(\sigma, \sigma')$ that does not avoid a set of permutations $\{ \pi_1, \dots, \pi_m \}$ and inserting the maximum element $(n+1)$ into both $\sigma$ and $\sigma'$ results in a $3$-permutation that avoids these patterns. For example, the $3$-permutation $(312,123)$ contains $231$, but $(3124, 4123)$ avoids both $231$ and $321$. Although in the following proofs we aim to construct elements in $S_{n+1}^2(\pi_1, \pi_2)$ from $S_n^2(\pi_1, \pi_2)$, we will prove that for each set of patterns $\{\pi_1, \pi_2\}$, it is impossible to insert $n+1$ into $\sigma$ and $\sigma'$ of a $3$-permutation $(\sigma, \sigma')$ containing $\pi_1$ or $\pi_2$ such that the resulting $3$-permutation avoids $\pi_1$ and $\pi_2$. It is clear that if $\sigma$ or $\sigma'$ contains these patterns, then inserting $n+1$ anywhere into these permutations will still contain these patterns. Hence, it is enough to show that for a $3$-permutation $(\sigma, \sigma')$ where $\sigma$ and $\sigma'$ avoid $\pi_1$ and $\pi_2$ but $\sigma' \circ \sigma^{-1}$ contains either pattern, inserting $n+1$ anywhere will result in a $3$-permutation which still contains either pattern. In the following proofs, note that given a $3$-permutation $(\sigma, \sigma')$, if the maximal element $n+1$ is inserted into the same position in both $\sigma$ and $\sigma'$, then $n+1$ is inserted at the end of the projection $\sigma' \circ \sigma^{-1}$.

\begin{theorem}\label{132,231}
Let $a_n = |S_n^2(132,231)|$. Then $a_n$ satisfies the recurrence relation $a_{n+1} = 2a_n + 2^{n}$ with initial term $a_1 =1$, which corresponds with OEIS sequence \href{http://oeis.org/A001787}{A001787}.
\end{theorem}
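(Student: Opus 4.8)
The plan is to prove the recurrence $a_{n+1} = 2a_n + 2^n$ by a constructive analysis of where the maximal element can be inserted, after first pinning down the (very rigid) shape of a $132,231$-avoiding permutation. First I would show that if $\pi \in S_m$ avoids both $132$ and $231$, then $\pi$ attains its maximum at position $1$ or position $m$: if $m$ sat at some position $i$ with $1 < i < m$, then for any $j < i$ and $l > i$ the triple $\pi(j)\,\pi(i)\,\pi(l)$ is order-isomorphic to $132$ (when $\pi(j) < \pi(l)$) or to $231$ (when $\pi(j) > \pi(l)$), a contradiction. Iterating this, $S_m(132,231)$ consists exactly of the ``valley'' permutations — strictly decreasing down to $1$, then strictly increasing — so $|S_m(132,231)| = 2^{m-1}$. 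I would also record the closure facts I will use: prepending or appending a new maximum to a valley yields a valley; $\rev$ sends valleys to valleys and interchanges the patterns $132$ and $231$; and deleting the maximum from a $132,231$-avoider keeps it a $132,231$-avoider.

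The main argument then runs as follows. Fix $\boldsymbol{\hat\sigma} = (\hat\sigma,\hat\sigma') \in S_{n+1}^2(132,231)$, so $\hat\sigma$, $\hat\sigma'$, and the projection $\hat\tau = \hat\sigma' \circ \hat\sigma^{-1}$ are all valleys. By the structural fact applied to $\hat\sigma$ and $\hat\sigma'$, the entry $n+1$ sits at position $1$ or $n+1$ in each of them, so (as $n+1 \geq 2$) there are four disjoint and exhaustive cases, and deleting $n+1$ from each coordinate leaves permutations $\sigma,\sigma' \in S_n(132,231)$. If $n+1$ occupies the same position in both $\hat\sigma$ and $\hat\sigma'$ (two of the four cases), then by the observation preceding the theorem $\hat\tau$ equals the projection $\tau = \sigma' \circ \sigma^{-1}$ with $n+1$ appended, so $\hat\tau$ is a valley if and only if $\tau$ is; each of these two cases is therefore in bijection with $S_n^2(132,231)$ and contributes $a_n$. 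If instead $n+1$ is at position $1$ of $\hat\sigma$ and at position $n+1$ of $\hat\sigma'$, I would compute $\hat\tau$ from the point-set picture (sort the points $(\hat\sigma(i),\hat\sigma'(i))$ by first coordinate): the entry $n+1$ of $\hat\tau$ then lands at position $\sigma(n)$, so $\hat\tau$ can be a valley only if $\sigma(n) = 1$, which forces $\sigma = \rev(\Id_n)$; in that case $\hat\tau = (n+1)\,\rev(\sigma')$, a valley exactly when $\sigma'$ is. Hence this case is in bijection with the set of valleys $\sigma' \in S_n(132,231)$ and contributes $2^{n-1}$, and the last case contributes another $2^{n-1}$, either by the mirror computation (now $\sigma = \Id_n$ and $\hat\tau = (n+1)\sigma'$) or by invoking the position-reversal symmetry, which preserves both $\hat\tau$ and avoidance of $\{132,231\}$.

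Summing the four contributions yields $a_{n+1} = 2a_n + 2^n$, with base case $a_1 = |S_1^2(132,231)| = 1$; a one-line induction then gives the closed form $a_n = n\,2^{n-1}$, which is OEIS \href{http://oeis.org/A001787}{A001787}. I expect the only real obstacle to be the bookkeeping in the two ``crossed'' cases: one must carefully track how inserting $n+1$ at opposite ends of $\hat\sigma$ and $\hat\sigma'$ relocates the maximum of $\hat\tau$, and then verify that pinning that maximum to an endpoint rigidly collapses one of the two permutations to $\Id_n$ or $\rev(\Id_n)$ while leaving the other an arbitrary valley — this is exactly what produces the extra $2^n$, and a sloppy treatment would either miss it or double-count against the aligned cases.
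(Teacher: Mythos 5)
Your proof is correct and takes essentially the same route as the paper's: the same four-way split according to whether $n+1$ sits at the left or right end of each coordinate, the same forcing of $\sigma=\Id_n$ (resp.\ $\sigma=\rev(\Id_n)$) in the two crossed cases with projection $(n+1)\sigma'$ (resp.\ $(n+1)\rev(\sigma')$), and the same appeal to Simion--Schmidt's count $2^{n-1}$, yielding $a_{n+1}=2a_n+2^n$. The only cosmetic differences are that you argue top-down by locating and deleting the maximum instead of by insertion, and that you make the ``valley'' characterization of $S_n(132,231)$ explicit where the paper uses it implicitly.
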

\begin{proof}
Given any $\boldsymbol{\sigma} = (\sigma, \sigma') \in S^2_n(132,231)$, we construct an element of $S^2_{n+1}(132,231)$ by inserting the maximal element $n+1$ in both $\sigma$ and $\sigma'$. To avoid both $132$ and $231$, the maximal element $n+1$ must be inserted into either the beginning or end of $\sigma$ and $\sigma'$; otherwise if there are elements on both sides of $n+1$, then there must be either an occurrence of $132$ or $231$.

Appending the maximal element $n+1$ onto the left of both $\sigma$ and $\sigma'$ or onto the right of both $\sigma$ and $\sigma'$ also avoids $132$ and $231$. In other words, $(\sigma (n+1), \sigma'(n+1))$ and $((n+1)\sigma, (n+1)\sigma')$ both still avoid $132$ and $231$. This contributes $2a_{n}$ different $3$-permutations in $S_{n+1}^2(132,231)$.

We further make the following claim:

\begin{claim}\label{132 231 claim 1}
The $3$-permutation $(\sigma(n+1), (n+1)\sigma')$ avoids $132$ and $231$ if and only if $\sigma$ is $\mathrm{Id}_n$ and $\sigma' \in S_n^1(132,231)$.
\end{claim} 
\begin{proof}
For the forwards direction, suppose that $(\sigma(n+1), (n+1)\sigma')$ avoids $132$ and $231$. Now writing the projection $\proj(\sigma(n+1), (n+1)\sigma') = (\sigma_L (n+1) \sigma_R)$ for some subpermutations $\sigma_L$ and $\sigma_R$, note that $\sigma_R$ is nonempty, and using the reasoning mentioned above, $\sigma_L$ is empty. Otherwise, $(\sigma_L (n+1) \sigma_R)$ contains an occurrence of either $132$ or $231$. Thus, $\sigma$ begins with the minimal element $1$. But since $\sigma$ is forced to avoid the $132$ pattern, $\sigma$ is forced to be consecutive and becomes the identity permutation.

For the backwards direction, both $\Id_{n+1}$ and $((n+1)\sigma')$ still avoid $132$ and $231$. Further, the projection $\proj(\Id_{n+1}, (n+1)\sigma')$ evaluates to $(n+1)\sigma'$, which also still avoids $132$ and $231$. 
\end{proof}

Similarly, we have by symmetry the following claim:

\begin{claim}\label{132 231 claim 2}
The $3$-permutation $((n+1)\sigma, \sigma'(n+1))$ avoids $132$ and $231$ if and only if $\sigma$ is $\mathrm{rev}(\mathrm{Id}_n)$ and $\sigma' \in S_n^1(132,231)$.
\end{claim} 

Now we show that for $(\sigma, \sigma') \in S_n^2 \setminus S_n^2(132,231)$, we cannot obtain an element in $ S_{n+1}^2(132,231)$ by inserting the maximal element $n+1$ anywhere in $\sigma$ and $\sigma'$. We will assume that $\sigma$ and $\sigma'$ avoid these patterns but $\sigma' \circ \sigma^{-1}$ does not. Recall that we are forced to insert $n+1$ onto the left or right of $\sigma$ and $\sigma'$. Inserting $n+1$ onto the left of both $\sigma$ and $\sigma'$ or onto the right of both $\sigma$ and $\sigma'$ gives a $3$-permutation with a projection containing $\sigma' \circ \sigma^{-1}$, which contains either $132$ or $231$. Now for the $3$-permutation $((n+1)\sigma, \sigma'(n+1))$, our reasoning in Claim \ref{132 231 claim 2} gives that either the projection contains $132$ or $231$ or that $\sigma = \rev(\Id_n)$. In the latter case, $\proj(\sigma, \sigma')$ would become $\rev(\sigma')$, which avoids $132$ and $231$, a contradiction. Claim \ref{132 231 claim 1} provides a similar reasoning on how $(\sigma (n+1), (n+1)\sigma')$ contains $132$ or $231$. Because inserting $n+1$ anywhere else in $\sigma$ and $\sigma'$ gives an occurrence of $231$ or $132$, we ensure that elements of $S_n^2$ not belonging in $S_n^2(132,231)$ cannot result in elements belonging in $S_{n+1}^2(132,231)$ when we insert the maximal element $n+1$ anywhere into $\sigma$ and $\sigma'$.

Thus, we have shown that given any $3$-permutation $\boldsymbol{\sigma} = (\sigma, \sigma') \in S^2_n(132,231)$, we can construct two elements in $S^2_{n+1}(132,231)$; furthermore, we can construct one additional element in $S^2_{n+1}(132,231)$ if and only if $\sigma' \in S^1_n(132,231)$ and $\sigma$ is $\Id_n$ or $\rev(\Id_n)$. Simion and Schmidt \cite{simion1985restricted} have shown that $|S^1_n(132,231)| = 2^{n-1}$. In the cases where $\sigma$ is $\Id_n$ or $\rev(\Id_n)$, it follows that $\boldsymbol{\sigma}$ avoids $132$ and $231$ if and only if $\sigma'$ avoids these patterns, and hence, it follows that \begin{align*}
    a_{n+1} = 2a_n + 2^n. & \qedhere
\end{align*}

\end{proof}

\begin{theorem}\label{132,321}
Let $a_n = |S^2_n(132,321)|$. Then $a_n$ satisfies the recurrence $a_{n+1} = a_n +n(n+2)$ with initial term $a_1 = 1$, which corresponds with the OEIS sequence \href{http://oeis.org/A047732}{A047732}.
\end{theorem}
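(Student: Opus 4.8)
The plan is to ground everything in a structural description of the one–dimensional class $S_m^1(132,321)$. Writing $\tau=L\,m\,R$ with the maximum $m$ in position $p$, avoidance of $132$ forces every entry of $L$ to exceed every entry of $R$; then, since $m$ precedes $R$, avoidance of $321$ forces $R$ to be increasing, and, when $L$ and $R$ are both nonempty, forces $L$ to be increasing as well. Consequently, if $m$ is not in the last position then $\tau$ is one of the $m-1$ \emph{nontrivial cyclic shifts} $(q{+}1)(q{+}2)\cdots m\,1\,2\cdots q$ with $1\le q\le m-1$, whereas if $m$ is in the last position then $\tau$ is obtained from an arbitrary element of $S_{m-1}^1(132,321)$ by appending $m$ (this also recovers $|S_m^1(132,321)|=\binom m2+1$). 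Three facts will be used repeatedly: (i) a permutation avoiding $132$ and $321$ whose maximum is not in the last position is a nontrivial cyclic shift; (ii) the cyclic shifts of $[m]$ form a group of order $m$ under composition, and all of them avoid $132$ and $321$; and (iii) composing a permutation on the right by a cyclic shift merely rotates its one–line word cyclically by the corresponding amount.

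I would then partition $S_{n+1}^2(132,321)$ by the position of $n+1$, recalling that $(\sigma,\sigma')\in S_{n+1}^2(132,321)$ exactly when each of the three projections $\sigma$, $\sigma'$, $\sigma'\circ\sigma^{-1}$ avoids $132$ and $321$. \textbf{Case 1:} $n+1$ occupies the last position of both $\sigma$ and $\sigma'$. Then, by the observation stated just before Theorem~\ref{132,231}, $n+1$ also occupies the last position of $\sigma'\circ\sigma^{-1}$; since appending or deleting the maximum preserves avoidance of $132$ and $321$ in each of the three projections, deleting $n+1$ from both coordinates is a bijection from this case onto $S_n^2(132,321)$, contributing $a_n$ elements. \textbf{Case 2:} otherwise; then $n+1$ is in a non-final position of $\sigma$ or of $\sigma'$, so by fact (i) at least one of $\sigma,\sigma'$ is a nontrivial cyclic shift of $[n+1]$.

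I would count Case 2 by inclusion–exclusion. Fixing a nontrivial cyclic shift $\sigma$ of $[n+1]$ (there are $n$ of these), I count the $\sigma'$ with $(\sigma,\sigma')\in S_{n+1}^2(132,321)$: since $\sigma$ already avoids the patterns, this requires only that $\sigma'$ and $\sigma'\circ\sigma^{-1}$ avoid them. By fact (iii), $\sigma'\circ\sigma^{-1}$ is a cyclic rotation of the word $\sigma'$ by a nonzero amount, so its maximum is \emph{not} in the last position; hence, by facts (i)–(ii), $\sigma'\circ\sigma^{-1}$ avoids $132$ and $321$ only if it is a cyclic shift, and then $\sigma'$ (a rotation of it) is a cyclic shift too. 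Conversely, if $\sigma'$ is any cyclic shift then $\sigma'\circ\sigma^{-1}$ is again a cyclic shift, hence admissible. So the admissible $\sigma'$ are exactly the $n+1$ cyclic shifts of $[n+1]$, yielding $n(n+1)$ pairs whose first coordinate is a nontrivial cyclic shift. Since $\{132,321\}$ is closed under inversion and $\sigma\circ\sigma'^{-1}=(\sigma'\circ\sigma^{-1})^{-1}$, the swap $(\sigma,\sigma')\mapsto(\sigma',\sigma)$ is a bijection of $S_{n+1}^2(132,321)$, so there are likewise $n(n+1)$ pairs whose second coordinate is a nontrivial cyclic shift; the overlap, where both coordinates are nontrivial cyclic shifts (so that $\sigma'\circ\sigma^{-1}$ is automatically a cyclic shift), has size $n^2$. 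Therefore Case 2 contributes $2n(n+1)-n^2=n(n+2)$, so $a_{n+1}=a_n+n(n+2)$; together with $a_1=|S_1^2(132,321)|=1$ and a comparison of the first few terms with the closed form of \href{http://oeis.org/A047732}{A047732}, this finishes the proof.

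The step I expect to be the main obstacle is Case 2, where the crucial mechanism is the combination of facts (i) and (iii): right-composing by a cyclic shift is just a rotation of the word, which is what lets a cyclic-shift constraint on one coordinate of a $3$-permutation propagate to the other coordinate through the projection $\sigma'\circ\sigma^{-1}$. Proving the structure theorem for $S_m^1(132,321)$ is routine but needs some care—particularly the analysis of a $321$ spanning both $L$ and $R$—and the remaining work (the inclusion–exclusion, the base case, and the OEIS identification) is straightforward bookkeeping.
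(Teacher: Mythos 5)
Your argument is correct in substance and reaches the recurrence by a genuinely different route from the paper. The paper proceeds constructively: it fixes $(\sigma,\sigma')\in S_n^2(132,321)$, inserts $n+1$ into both coordinates, and runs a case analysis (both right blocks nonempty, exactly one empty, both empty), checking via the projections that exactly the claimed insertions survive, which yields $a_n+(n-1)^2+4(n-1)+3$. You instead classify $S_{n+1}^2(132,321)$ directly by the position of the maximum, using the structural facts that a $\{132,321\}$-avoider whose maximum is not last is a nontrivial cyclic shift, that the cyclic shifts form a group all of whose elements avoid both patterns, and that right-composition by a cyclic shift rotates the one-line word; inclusion--exclusion over which coordinate is a nontrivial cyclic shift gives $2n(n+1)-n^2=n(n+2)$, and deleting $n+1$ from the remaining case gives $a_n$. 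The underlying structure (both blocks consecutively increasing, left block above right block) is the same as in the paper's proof, but your organization replaces the paper's delicate ``no other insertion works'' verifications with a closed group-theoretic count, and the swap symmetry you invoke is exactly the paper's Corollary~\ref{coro}.

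One sentence needs repair. In Case 2 you claim that, since $\sigma'\circ\sigma^{-1}$ is a nonzero rotation of the word of $\sigma'$, its maximum is not in the last position. That inference is false as stated: for a nontrivial cyclic shift $\sigma$, the admissible choice $\sigma'=\sigma$ gives $\sigma'\circ\sigma^{-1}=\Id_{n+1}$, whose maximum \emph{is} last. The conclusion you draw survives under a two-case fix: if the maximum of $\sigma'$ is not in the last position, then $\sigma'$ is already a nontrivial cyclic shift by your fact (i); if it is in the last position, then the maximum of $\sigma'\circ\sigma^{-1}$ sits at position $\sigma(n+1)\neq n+1$, so fact (i) applies to the projection, which is therefore a nontrivial cyclic shift, and then $\sigma'=(\sigma'\circ\sigma^{-1})\circ\sigma$ is a cyclic shift (necessarily the identity). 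With that adjustment the admissible $\sigma'$ are exactly the $n+1$ cyclic shifts, and the rest of your bookkeeping, base case, and OEIS identification go through.
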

\begin{proof}
Let us write $\boldsymbol{\sigma} = (\sigma, \sigma') \in S^2_n(132,321)$ of the form $(\sigma_L n \sigma_R, \sigma_L' n \sigma_R')$. We construct an element of $S^2_{n+1}(132,321)$ by inserting the maximal element $n+1$ in both $\sigma$ and $\sigma'$.

Inserting $n+1$ onto the end of $\sigma$ and $\sigma'$ always constructs a $132$-avoiding and $321$-avoiding $3$-permutation, and this contributes $a_n$ different $3$-permutations to $S^2_{n+1}(132,321)$. 

We also have the following three cases. Figure \ref{fig:132,321 block} gives an example on what $\sigma$ may look like.

\begin{enumerate}
    \item $\sigma_R$ and $\sigma_R'$ are both nonempty and $\sigma_L n$, $\sigma_L' n$, $\sigma_R$, and $\sigma_R'$ are all consecutively increasing. 
    
    \item Exactly one of $\sigma_R$, $\sigma_R'$ is empty and the other is of the form $\sigma_L n \sigma_R$, where $\sigma_L n$ and $\sigma_R$ are consecutively increasing.
    
    \item Both $\sigma_R$, $\sigma_R'$ are empty.
\end{enumerate}

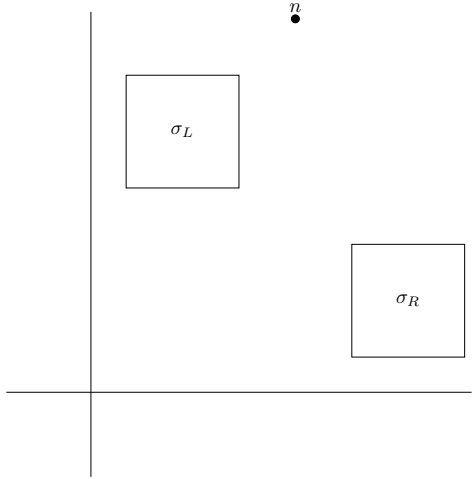
\begin{figure}[h]
    \centering
    \scalebox{0.75}{
    \begin{tikzpicture}
        \foreach [evaluate=\i as \x using int(\i-1)]\i in {0,1,...,8}
        {
            \foreach [evaluate=\j as \y using int(\j-1)] \j in {0,1,...,8}
            {
                \node at (\i,\j)[name=perm-\x-\y,]{};
            }
        }

        \draw ([xshift=-5mm]perm-1--1.south west)--([xshift=-5mm]perm-1-7.north west);
        \draw ([yshift=5mm]perm--1-0.south west)--([yshift=5mm]perm-7-0.south east);

        \draw[draw=black] (2,5) rectangle ++(2,2);
        \node at (3,6) {$\sigma_L$};

        \filldraw [black] (5,8) circle (2pt) node[anchor=south]{$n$};

        \draw[draw=black] (6,2) rectangle ++(2,2);
        \node at (7,3) {$\sigma_R$};

\end{tikzpicture}
}
    \caption{An example of what $\sigma_L n \sigma_R$ may look like when avoiding $132$ and $321$.}
    \label{fig:132,321 block}
\end{figure}

First we show that when $\boldsymbol{\sigma}$ does not belong to any of these cases, inserting the maximal element $n+1$ into  $\boldsymbol{\sigma}$ cannot avoid these patterns. Let $\boldsymbol{\sigma} = (\sigma_L n \sigma_R, \sigma_L' n \sigma_R')$.  If $\sigma_L n$ is increasing but not consecutively increasing, $\sigma$ must contain an occurrence of $132$. Further, note that $\sigma_R$ must not contain a $21$ pattern, and in the case where $\sigma_L n$ is consecutively increasing, $\sigma_R$ is consecutively increasing.
 
 So the only case we need to consider is where there is an occurrence of $ab$ in $\sigma_L$ for some $b <a$; the argument for where $\sigma_L'$ contains the pattern $21$ is similar. 
 Note that every element of $\sigma_L$ and $\sigma_L'$ still must be greater than every element of $\sigma_R$ and $\sigma_R'$, respectively; otherwise, they would contain an occurrence of $132$. This implies that $\sigma_R$ cannot contain elements in the interval $(b, n)$. Similarly, if $\sigma_R$ contains elements in the interval $[1, b)$, then $\boldsymbol{\sigma}$ would contain an occurrence of $321$. Thus, $\sigma_R$ is empty. Inserting $n+1$ to the left of $a$ gives an occurrence of $321$. And inserting $n+1$ to the right of $a$ gives an occurrence of $132$. Hence, nothing outside these cases avoids $132$ and $321$.

Now we present each case:
\begin{enumerate}
    \item We claim that only $(\sigma_L n (n+1) \sigma_R, \sigma_L' n (n+1) \sigma_R')$ avoids $132$ and $321$.

    Let us write $\gamma$ as the projection $(\sigma_L' n (n+1) \sigma_R') \circ (\sigma_L n (n+1) \sigma_R)^{-1}$. Since $\sigma_L$ and $\sigma_R$ are consecutive and $\sigma_R$ must start with $1$, note that in $\gamma$, either $n+1$ is right-adjacent to $n$, or $\gamma$ begins with $n+1$. In the former case, $\gamma$ is of the form $\pi_L \pi_R$, where $\pi_L$ and $\pi_R$ are consecutively increasing and each element of $\pi_L$ is greater than each element of $\pi_R$. Note that $\pi_R$ may be empty, in which case $\pi_L$ is the identity permutation. It is clear that $\pi_L \pi_R$ avoids $132$ and $321$. In the latter case, $\gamma$ is of the form $(n+1) \sigma_R' \sigma_L' n$. But $\sigma_R' \sigma_L' n$ is strictly increasing, and hence, $\gamma$ also avoids $132$ and $321$. Therefore, the $3$-permutation $(\sigma_L n (n+1) \sigma_R, \sigma_L' n (n+1) \sigma_R')$ avoids these patterns too.
    
    Now we show that inserting $n+1$ into $\boldsymbol{\sigma}$ anywhere else cannot result in an element in $S_{n+1}^2(132,321)$. In particular, we show that we are forced to insert $n+1$ at the end of $\sigma$ and $\sigma'$ or right-adjacent to $n$ in these two permutations. Otherwise, since $\sigma_L$, $\sigma_L'$, $\sigma_R$, and $\sigma_R'$ are all consecutively increasing, $\sigma$ or $\sigma'$ would contain $132$. If we insert $n+1$ to the left of $\sigma$ or $\sigma'$, we would have an occurrence of $321$.
    
    Now it is sufficient to show $(\sigma_L n \sigma_R (n+1), \sigma_L' n (n+1) \sigma_R')$ and $(\sigma_L n (n+1) \sigma_R, \sigma_L' n \sigma_R' (n+1))$ do not avoid $132$ and $321$.
    
    To see the former, we take the projection $\rho$, and depending on the lengths of $\sigma_L$ and $\sigma_L'$, we have the following three cases. The projections in each case are illustrated in Figure \ref{fig:132,321 p2}.

    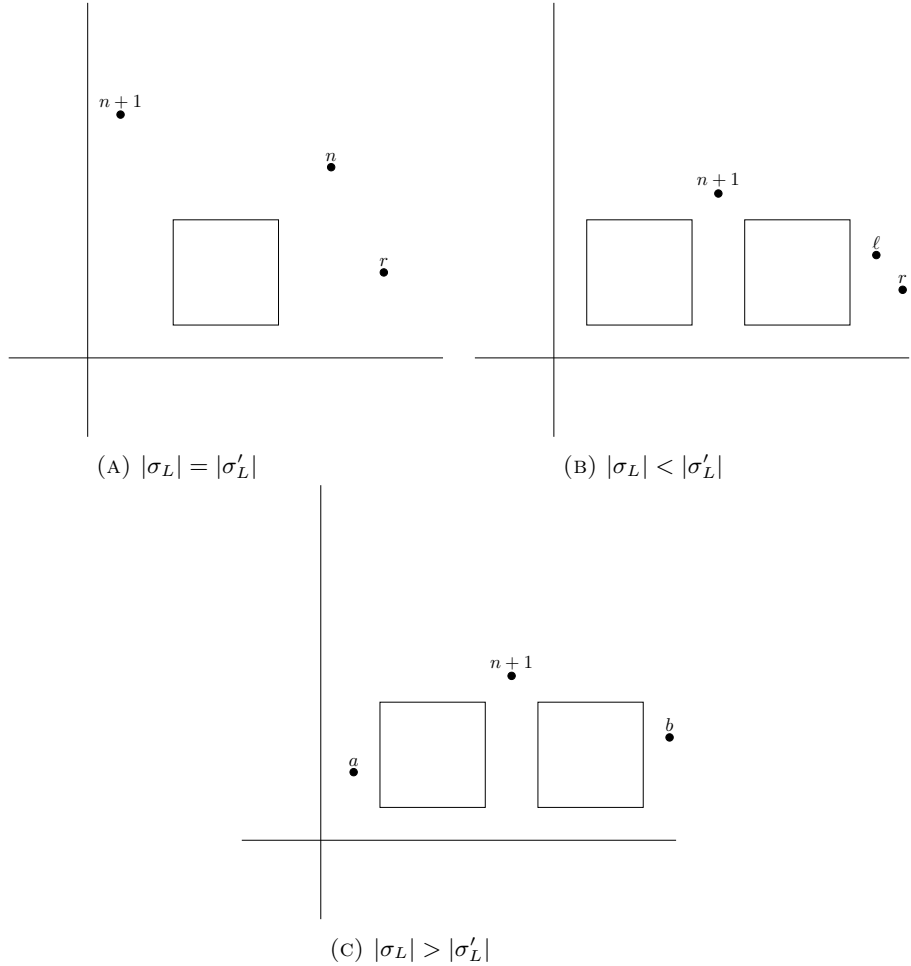
\begin{figure}[h]
        \centering
        \begin{subfigure}{0.3\textwidth}
         \scalebox{0.7}{
        \begin{tikzpicture}
        \foreach [evaluate=\i as \x using int(\i-1)]\i in {0,1,...,8}
        {
            \foreach [evaluate=\j as \y using int(\j-1)] \j in {0,1,...,8}
            {
                \node at (\i,\j)[name=perm-\x-\y,]{};
            }
        }

        \draw ([xshift=-5mm]perm-1--1.south west)--([xshift=-5mm]perm-1-7.north west);
        \draw ([yshift=5mm]perm--1-0.south west)--([yshift=5mm]perm-7-0.south east);

        \filldraw [black] (2,6) circle (2pt) node[anchor=south]{$n+1$};

        \draw[draw=black] (3,2) rectangle ++(2,2);
        \node at (4,3){};

        \filldraw [black] (6,5) circle (2pt) node[anchor=south]{$n$};

        \filldraw [black] (7,3) circle (2pt) node[anchor=south]{$r$};

\end{tikzpicture}
}
        \caption{$|\sigma_L| = |\sigma_L'|$}
        \end{subfigure}\hspace{0.1\textwidth}%
\begin{subfigure}{0.3\textwidth}
        \centering
         \scalebox{0.7}{
        \begin{tikzpicture}
        \foreach [evaluate=\i as \x using int(\i-1)]\i in {0,1,...,8}
        {
            \foreach [evaluate=\j as \y using int(\j-1)] \j in {0,1,...,8}
            {
                \node at (\i,\j)[name=perm-\x-\y,]{};
            }
        }

        \draw ([xshift=-5mm]perm-1--1.south west)--([xshift=-5mm]perm-1-7.north west);
        \draw ([yshift=5mm]perm--1-0.south west)--([yshift=5mm]perm-7-0.south east);

        \draw[draw=black] (2,2) rectangle ++(2,2);
        
        \filldraw [black] (4.5,4.5) circle (2pt) node[anchor=south]{$n+1$};

        \draw[draw=black] (5,2) rectangle ++(2,2);

        \filldraw [black] (7.5,3.33) circle (2pt) node[anchor=south]{$\ell$};

        \filldraw [black] (8,2.67) circle (2pt) node[anchor=south]{$r$};

\end{tikzpicture}
}
        \caption{$|\sigma_L| < |\sigma_L'|$}
        \end{subfigure}\hspace{0.1\textwidth}%

        \begin{subfigure}{0.3\textwidth}
        \centering
         \scalebox{0.7}{
        \begin{tikzpicture}
        \foreach [evaluate=\i as \x using int(\i-1)]\i in {0,1,...,8}
        {
            \foreach [evaluate=\j as \y using int(\j-1)] \j in {0,1,...,8}
            {
                \node at (\i,\j)[name=perm-\x-\y,]{};
            }
        }

        \draw ([xshift=-5mm]perm-1--1.south west)--([xshift=-5mm]perm-1-7.north west);
        \draw ([yshift=5mm]perm--1-0.south west)--([yshift=5mm]perm-7-0.south east);

        \filldraw [black] (2,2.67) circle (2pt) node[anchor=south]{$a$};
        
        \draw[draw=black] (2.5,2) rectangle ++(2,2);
        
        \filldraw [black] (5,4.5) circle (2pt) node[anchor=south]{$n+1$};

        \draw[draw=black] (5.5,2) rectangle ++(2,2);

        \filldraw [black] (8,3.33) circle (2pt) node[anchor=south]{$b$};

\end{tikzpicture}
}
        \caption{$|\sigma_L| > |\sigma_L'|$}
        \end{subfigure}

        \caption{Illustrations of the projection of $(\sigma_L n \sigma_R (n+1), \sigma_L' n (n+1) \sigma_R')$ for the cases $|\sigma_L| = |\sigma_L'|$, $|\sigma_L| < |\sigma_L'|$, and $|\sigma_L| > |\sigma_L'|$, respectively.}
        \label{fig:132,321 p2}
    \end{figure}

        \begin{enumerate}
            \item $|\sigma_L| = |\sigma_L'|$. Then the last two elements of $\rho$ must be $n r$, where $r$ is an element in $\sigma_R'$. Note that the maximum element $n+1$ must appear before this occurrence, and hence, $\rho$ contains an occurrence of $321$.
            
            \item $|\sigma_L| < |\sigma_L'|$. Then the last two elements of $\rho$ must be $\ell r$, where $\ell$ is an element in  $\sigma_L'$ and $r$ is an element in $\sigma_R'$. Note that the maximum element $n+1$ must appear somewhere in $\rho$ before this occurrence, and thus, $\rho$ contains an occurrence of $321$.
            
            \item $|\sigma_L| > |\sigma_L'|$. Then $\rho$ begins with an element $a$ in $\sigma_R'$ and ends with a larger element $b$ in $\sigma_R'$. However, the maximum element $n+1$ must appear in between these elements, and we conclude that $\rho$ contains an occurrence of $132$.
        \end{enumerate}

        A similar reasoning shows that $(\sigma_L n (n+1) \sigma_R, \sigma_L' n \sigma_R' (n+1))$ does not avoid $132$ and $321$.
    
    There are $(n-1)$ ways to choose $\sigma_L n \sigma_R$ and $\sigma_L' n \sigma_R'$. This case contributes $(n-1)^2$ distinct $3$-permutations to $S^2_{n+1}(132,321)$.
    
    \item Let $\sigma_R$ be empty - the case where $\sigma_R'$ is empty follows a similar reasoning. Then we claim that only the $3$-permutations $((n+1)\sigma_L n, \sigma_L' n (n+1) \sigma_R')$ and $(\sigma_L n(n+1), \sigma_L' n (n+1) \sigma_R')$ avoid $132$ and $321$. 
    
    Checking that both of these $3$-permutations avoid $132$ and $321$ uses a similar argument to the previous case. Now we show that inserting $n+1$ anywhere else in $\boldsymbol{\sigma}$ cannot avoid the patterns $132$ and $321$. In particular, we must insert $n+1$ into the beginning or end of $\sigma$ and either right-adjacent to $n$ or at the end in $\sigma'$. Hence, it is sufficient to show that $((n+1)\sigma_L n, \sigma_L' n \sigma_R' (n+1))$ does not avoid $132$ and $321$. 
    
    Now taking $\proj((n+1)\sigma_L n, \sigma_L' n \sigma_R' (n+1))$ gives us a permutation of the form $\pi (n+1) c$, where $c$ is the first element of $\sigma_L' n$ and $\pi$ is some subpermutation. Since $\sigma_R'$ is nonempty, $\pi$ contains elements in $\sigma_R'$, and this composition contains an instance of $132$. 
    
    Since there are $n-1$ ways to choose $\sigma_L' n \sigma_R'$, this contributes $2(n-1)$ many $3$-permutations to $S^2_{n+1}(132,321)$. A similar argument holds for when $\sigma_R'$ is empty and $\sigma_R$ is nonempty. Hence, this case contributes $4(n-1)$ many $3$-permutations in total to $S^2_{n+1}(132,321)$.
    
    \item Note that we must insert $n+1$ into the beginning or end of $\sigma$ and $\sigma'$. However, the only $3$-permutations that avoid $132$ and $321$ obtained by inserting $n+1$ into both $\sigma$ and $\sigma'$ are the following: $(\Id_n (n+1), (n+1) \Id_n)$, $((n+1)\Id_n, (n+1) \Id_n)$, and $((n+1)\Id_n, \Id_n (n+1))$. To see this, let us consider $((n+1) \sigma, \sigma' (n+1))$. Inserting $n+1$ at the beginning of $\sigma$ forces $\sigma$ to be the identity. Using a similar reasoning presented in Case 2, unless $\sigma' = \Id_n$, inserting $n+1$ at the end of $\sigma'$ will cause $\proj((n+1) \sigma, \sigma' (n+1))$ to contain $132$. A similar argument can be used for the other $3$-permutations, and checking that these avoid $132$ and $321$ is simple. Hence, this case contributes $3$ new elements in $S^2_{n+1}(132,321)$.
\end{enumerate}

Now we show that for $(\sigma, \sigma') \in S_n^2 \setminus S_n^2(132,321)$, we cannot obtain an element in $ S_{n+1}^2(132,321)$ by inserting the maximal element $n+1$ anywhere in $\sigma$ and $\sigma'$. Suppose $\sigma$ and $\sigma'$ avoid $132$ and $321$ but $\sigma' \circ \sigma^{-1}$ contains either pattern. We iterate the same cases as above, noting that if none of these cases hold, inserting $n+1$ anywhere will give a $3$-permutation containing $132$ or $321$. 

\begin{enumerate}
    \item We show that it is impossible to have $\sigma$ and $\sigma'$ avoid these patterns but have $\sigma' \circ \sigma^{-1}$ contain them. Let  $(\sigma, \sigma') = (\sigma_L n \sigma_R, \sigma_L' n \sigma_R')$ and $\rho = \proj(\sigma, \sigma')$. Recall that $\sigma_L n$, $\sigma_L' n$, $\sigma_R$, and $\sigma_R$' are all consecutively increasing. We have the same subcases as above:
        \begin{enumerate}
            \item $|\sigma_L| = |\sigma_L'|$. Then $\rho$ is $\sigma_R' \sigma_L' n$, which avoids $132$ and $321$.

            \item $|\sigma_L| < |\sigma_L'|$. Then $\rho$ is in the form $\pi_L \pi_R$, where $\pi_L$ and $\pi_R$ are consecutively increasing and every element of $\pi_L$ is greater than every element of $\pi_R$. Note that $\rho$ avoids both $132$ and $321$.

            \item $|\sigma_L| > |\sigma_L'|$. Then $\rho$ is in the same form as the previous case.
        \end{enumerate}

        Hence, it is impossible for $\sigma$ and $\sigma'$ to avoid $132$ and $321$ while $\sigma' \circ \sigma^{-1}$ contains them.
            
    \item Let $\sigma_R$ be empty. The case where $\sigma_R'$ is empty follows a similar reasoning. The proof of Case 2 above shows that the $3$-permutation $((n+1) \sigma_L n, \sigma_L' n \sigma_R' (n+1))$ contains an instance of $132$, and it is clear that $(\sigma_L n (n+1), \sigma_L' n \sigma_R' (n+1))$ contains an occurrence of $\sigma' \circ \sigma^{-1}$. Now we consider $((n+1) \sigma_L n, \sigma_L' n (n+1) \sigma_R')$. Let $\ell$ be the first element of $\sigma_L'$. 
    Now we note that $\proj((n+1) \sigma_L n, \sigma_L' n (n+1) \sigma_R')$ is obtained from $\proj(\sigma_L n, \sigma_L' n \sigma_R')$ as follows: each element belonging to the subpermutation $\sigma_L' n$ is increased by $1$, every element in $\sigma_R'$ remains unchanged, and $\ell$ is appended onto the end of the permutation. It is clear that if $\proj(\sigma_L n, \sigma_L' n \sigma_R')$ contains an instance of $132$ or $321$, then $\proj((n+1) \sigma_L n, \sigma_L' n (n+1) \sigma_R')$ must also contain an occurrence of these patterns.

    Now we consider $(\sigma_L n (n+1), \sigma_L' n (n+1) \sigma_R')$. Let $r$ be the last element in $\sigma_R'$. Similarly, $\proj(\sigma_L n (n+1), \sigma_L' n (n+1) \sigma_R')$ is obtained from the $\proj(\sigma_L n, \sigma_L' n \sigma_R')$ as follows: each element in $\sigma_R'$ decreases by $1$, the minimal element $1$ is replaced with $n+1$, elements in $\sigma_L' n$ remain unchanged, and $r$ is appended onto the end of the permutation. If the minimal element $1$ is not a part of the occurrence of $132$ or $321$ in $\proj(\sigma_L n, \sigma_L' n \sigma_R')$, then $\proj(\sigma_L n (n+1), \sigma_L' n (n+1) \sigma_R')$ still contains an occurrence of either pattern. If the minimal element $1$ is part of a $132$ pattern, the transformation described above turns the pattern into a $321$ pattern. Now suppose the minimal element $1$ is part of a $321$ pattern. Call this sequence $xy1$. If there is an element $a$ after $xy1$, then if $y>a$, then the transformation turns $xy1$ into a $321$ pattern. If $y<a$, then we instead have a $132$ pattern. Now if $xy1$ are the last elements in the permutation, then the transformation turns this into a permutation ending in $1$, which contains an occurrence of $321$. Hence, $(\sigma_L n (n+1), \sigma_L' n (n+1) \sigma_R')$ must contain $132$ or $321$.

    \item As mentioned in Case 3 above, in order for us to insert $n+1$ anywhere else into $\sigma$ and $\sigma'$ to avoid $132$ and $321$ in the resulting $3$-permutation, we must have $\sigma = \sigma' = \Id_n$. Then $\sigma' \circ \sigma^{-1} = \Id_n$, which does not contain $132$ or $321$, a contradiction.
\end{enumerate}
    
    We conclude that \begin{align*}
        a_{n+1} &= a_n + (n-1)^2 + 4(n-1) +3 \\
        &= a_n + n(n+2). & \qedhere
    \end{align*}

\end{proof}

\begin{theorem}\label{231,312}
Let $a_n = |S^2_n(231,312)|$. Then $a_n$ satisfies the recurrence  $a_{n+1} = 2a_n + 2a_{n-1}$ with initial terms $a_1 = 1$ and $a_2 =4$, which corresponds to the OEIS sequence \href{http://oeis.org/A026150}{A026150}.
\end{theorem}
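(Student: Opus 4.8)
The plan is to bypass the pattern conditions via a structural description of the objects and then read the recurrence off that description; a direct proof by inserting the new maximum, in the style of the previous theorems, is possible but heavier on bookkeeping.

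\textbf{Step 1 (structure of the classical class).} First I would show that a permutation avoids $231$ and $312$ exactly when it is \emph{layered}, i.e.\ a concatenation of consecutively decreasing blocks occupying consecutive intervals of positions and of values that tile $[n]$ from bottom to top; equivalently it is a direct sum $\rev(\Id_{a_1})\oplus\cdots\oplus\rev(\Id_{a_p})$ for a composition $(a_1,\dots,a_p)$ of $n$. This is an induction on the position $j$ of the maximum: if $1<j<n$, avoiding $231$ forces every entry to the left of $n$ to lie below every entry to its right, and then avoiding $312$ forces the block to the right of $n$ to be consecutively decreasing, with the left part layered by induction; if $j=1$, avoiding $312$ forces the rest to be consecutively decreasing; if $j=n$, strip $n$ and recurse. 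This recovers $|S^1_n(231,312)|=2^{n-1}$ and, importantly, shows every layered permutation is an involution.

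\textbf{Step 2 (reformulation for $3$-permutations).} By Step 1, $(\sigma,\sigma')\in S_n^2(231,312)$ iff $\sigma$, $\sigma'$ and the projection $\rho:=\sigma'\circ\sigma^{-1}$ are all layered; since $\sigma^{-1}=\sigma$ this says $\sigma$, $\sigma'$ and $\sigma'\sigma$ are layered. Let $P_1\sqcup\cdots\sqcup P_r$ be the coarsest partition of $[n]$ into intervals refined by the layer partitions of both $\sigma$ and $\sigma'$ (the ``overlay blocks''). Each of $\sigma,\sigma'$ fixes every $P_j$ setwise, so $\rho$ does too, and $\rho$ is layered iff $\rho|_{P_j}=(\sigma'|_{P_j})(\sigma|_{P_j})$ is layered for each $j$.

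\textbf{Step 3 (local lemma — the crux).} I would prove: if, inside an interval $P$, the layers of $\sigma|_P$ together with the layers of $\sigma'|_P$ connect all of $P$, then $(\sigma'|_P)(\sigma|_P)$ is layered if and only if one of $\sigma|_P,\sigma'|_P$ is the reversal of $P$ and the other is the reversal of $P$ or the identity on $P$ (these three configurations coincide when $|P|=1$). The forward direction is delicate: $(\sigma'|_P)(\sigma|_P)$ being a layered involution first forces $\sigma|_P$ and $\sigma'|_P$ to commute, and then an induction comparing the first layers of $\sigma|_P$, $\sigma'|_P$ and of their product pins down these configurations. I expect this lemma to be the main obstacle, as it is precisely where the projection condition couples the two permutations.

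\textbf{Step 4 (counting).} Steps 2–3 give a bijection between $S_n^2(231,312)$ and pairs consisting of a composition $(p_1,\dots,p_r)$ of $n$ together with, for every part $p_i\ge 2$, a choice of one of the three configurations above (a part of size $1$ allows only one). Hence, with $f(1)=1$ and $f(k)=3$ for $k\ge2$,
\[
a_n=\sum_{(p_1,\dots,p_r)\,\models\, n}\ \prod_{i=1}^{r} f(p_i).
\]
Splitting off the last part gives $a_{n+1}=a_n+3(a_1+\cdots+a_{n-1})+3$ for $n\ge2$; subtracting the same identity for $a_n$ gives $a_{n+1}-a_n=(a_n-a_{n-1})+3a_{n-1}$, i.e.\ $a_{n+1}=2a_n+2a_{n-1}$. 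Finally $a_1=1$ and $a_2=4$ are immediate (no size-$3$ pattern fits in size $2$, so all $2\cdot2$ pairs qualify), and $a_3=10$, $a_4=28$ agree with Table~\ref{double avoidance}, matching OEIS~\href{http://oeis.org/A026150}{A026150}.
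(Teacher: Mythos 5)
Your proposal is correct, but it proceeds by a genuinely different route than the paper. The paper argues locally: it inserts the new maximum $n+1$ into $(\sigma,\sigma')$, does a case analysis on the positions of $n$ and $n-1$ in the two coordinates, and shows every element of $S_n^2(231,312)$ produces exactly two children while those of the special form $(\sigma_L n,\sigma_L' n)$ (which are counted by $a_{n-1}$) produce two more, giving $a_{n+1}=2a_n+2a_{n-1}$ directly. You instead give a global structural characterization: both coordinates and the projection must be layered, and after passing to the common interval coarsening (your ``overlay blocks'' --- note the wording should be the \emph{finest} partition into intervals of which both layer partitions are refinements, not the coarsest one refined by them), your rigidity lemma says a connected block of size $k\ge 2$ carries exactly the three configurations $(\rev(\Id_k),\rev(\Id_k))$, $(\rev(\Id_k),\Id_k)$, $(\Id_k,\rev(\Id_k))$; counting compositions with weight $1$ on parts of size $1$ and $3$ on larger parts then yields the recurrence (and in fact the generating function $(1-x)/(1-2x-2x^2)$). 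The lemma you flag as the crux is indeed true and provable essentially as you indicate: the product being layered makes it an involution, so the two restrictions commute; comparing the first layers, equal lengths force (by connectivity) a single common layer, i.e.\ both reversals, while unequal lengths force the shorter first layer to be a singleton, the product's first layer to be the longer one, hence one factor to be the identity there, and connectivity then pushes that layer to the whole block. What each approach buys: the paper's insertion argument is elementary and uniform with its other theorems, while yours additionally delivers an explicit description of the avoidance class and a closed product formula over compositions, at the price of proving the rigidity lemma carefully (including the easy but needed fact that a permutation preserving the blocks is layered iff each restriction is).
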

\begin{proof}

Let $\boldsymbol{\sigma} = (\sigma, \sigma') \in S_n^2(231,312)$ and write $\boldsymbol{\sigma}$ of the form $(\sigma_L n \sigma_R, \sigma_L' n \sigma_R')$. Note that each element of $\sigma_L$ and $\sigma_L'$ are less than each element of $\sigma_R$ and $\sigma_R'$, respectively. Further, $\sigma_R$ and $\sigma_R'$ have to be consecutively decreasing. If $\sigma_R$ is nonempty, $n-1$ must be right-adjacent to $n$ in $\sigma$ to avoid instances of $231$ and $312$. We then have the following cases, where $\sigma_R$ and $\sigma_R'$ may be empty. Figure \ref{fig:231,312 block} illustrates an example of what $\sigma$ may look like.

\begin{enumerate}
    \item $\boldsymbol{\sigma}$ is of the form $(\sigma_L n, \sigma_L' n)$.
        
    \item $\boldsymbol{\sigma}$ is of the form $(\sigma_L (n-1) n, \sigma_L' n (n-1)  \sigma_R')$.
    
    \item $\boldsymbol{\sigma}$ is of the form $(\sigma_L n (n-1) \sigma_R, \sigma_L' (n-1) n)$.
    
    \item $\boldsymbol{\sigma}$ is of the form $(\sigma_L n (n-1) \sigma_R, \sigma_L' n (n-1) \sigma_R')$.
\end{enumerate}

\begin{figure}[h]
    \centering
    \scalebox{0.75}{
    \begin{tikzpicture}
        \foreach [evaluate=\i as \x using int(\i-1)]\i in {0,1,...,8}
        {
            \foreach [evaluate=\j as \y using int(\j-1)] \j in {0,1,...,8}
            {
                \node at (\i,\j)[name=perm-\x-\y,]{};
            }
        }

        \draw ([xshift=-5mm]perm-1--1.south west)--([xshift=-5mm]perm-1-7.north west);
        \draw ([yshift=5mm]perm--1-0.south west)--([yshift=5mm]perm-7-0.south east);

        \draw[draw=black] (2,2) rectangle ++(2,2);
        \node at (3,3) {$\sigma_L$};

        \filldraw [black] (4.5,7.5) circle (2pt) node[anchor=south]{$n$};

        \filldraw [black] (5,7) circle (2pt) node[anchor=west]{$n-1$};

        \draw[draw=black] (5.5,4.5) rectangle ++(2,2);
        \node at (6.5,5.5) {$\sigma_R$};

\end{tikzpicture}
}
    \caption{An example of what $\sigma_L n (n-1) \sigma_R$ may look like when avoiding $231$ and~$312$.}
    \label{fig:231,312 block}
\end{figure}
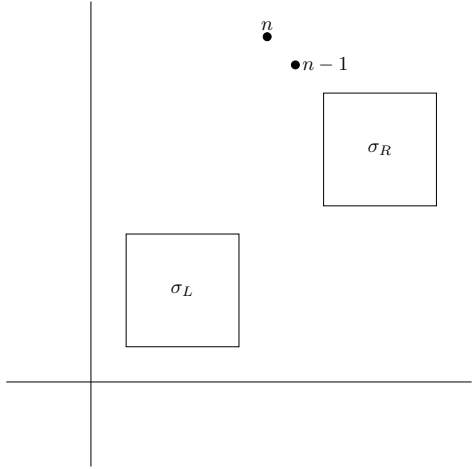

Now we present each case:

\begin{enumerate}
    \item $(\sigma, \sigma') = (\sigma_L n, \sigma_L' n)$.
    
    The maximal element $n+1$ must be inserted adjacent to $n$ in both $\sigma$ and $\sigma'$. If not, then there would be an occurrence of $312$. By evaluating their projections, we can verify that the following $3$-permutations avoid $231$ and $312$: $(\sigma_L n (n+1), \sigma_L' n (n+1))$, ${(\sigma_L n (n+1), \sigma_L' (n+1)n)}$, ${(\sigma_L (n+1) n, \sigma_L' n (n+1))},$ and $(\sigma_L (n+1)n, \sigma_L' (n+1)n)$. Thus, each instance of $\boldsymbol{\sigma}$ in this case contributes $4$ new $3$-permutations that avoid $231$ and $312$.
        
        \item $(\sigma, \sigma')= (\sigma_L (n-1) n, \sigma_L' n (n-1)  \sigma_R')$.
        
        Then $n (n-1) \sigma_R'$ must be consecutively decreasing. Note that appending the maximal element ${n+1}$ onto the end of $\sigma$ and $\sigma'$ also avoids $231$ and $312$. In other words, the $3$-permutation $(\sigma_L (n-1) n (n+1), \sigma_L' n (n-1)  \sigma_R' (n+1))$ avoids $231$ and $312$. In addition, the $3$-permutation $(\sigma_L (n-1) n (n+1), \sigma_L' (n+1) n (n-1)  \sigma_R')$ also avoids $231$ and $312$. 
        
        To see this, we first evaluate the projection of $(\sigma_L (n-1) n, \sigma_L' n (n-1)  \sigma_R')$. Call this $\gamma$. As shown in Figure \ref{fig1}, we can subdivide $\sigma_L$ into $\pi_L$ and $\pi_R$, where $|\pi_L| = |\sigma_L'|$. Recall that $n (n-1) \sigma_R'$ is consecutively decreasing, and thus, the sequence $\pi_R (n-1) n$ must be consecutively increasing to prevent $\gamma$ from containing an instance of $231$ or $312$. 
        Note that this implies that each element of $\pi_L$ is less than each element of $\pi_R$. Thus, $\gamma$ is of the form $(\sigma_L' \circ \pi_L^{-1}) n (n-1) \sigma_R'$, which we note must avoid $231$ and $312$ since $(\sigma, \sigma')$ avoids these patterns. Further, note that $(\sigma_L (n-1) n (n+1), \sigma_L' (n+1) n (n-1)  \sigma_R')$ avoids $231$ and $312$ because its projection is of the form $(\sigma_L' \circ \pi_L^{-1})(n+1)n (n-1) \sigma_R'$, which also avoids these patterns.

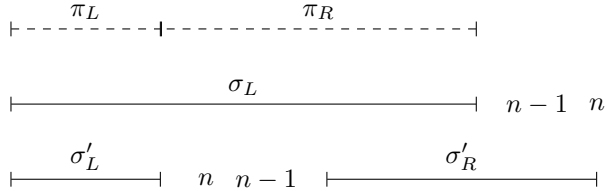
\begin{figure}[htp]
\centering
    \begin{tikzpicture}[coo/.style={coordinate}]
	\path (0,0) coordinate (x1) --++ (2,0) coordinate (x2)
			--++ (0.6,0) coordinate (x3)
			--++ (0.8,0) coordinate (x4)
			--++ (0.8,0) coordinate (x5)
			--++ (2,0) coordinate (x6)
			--++ (0.8,0) coordinate (x7)
			--++ (0.8,0) coordinate (x8);
	\foreach \i in {1,2,3}
		\coordinate[] (L\i) at (0,-\i);

	\draw[|-|] (L2-|x1) -- (L2-|x6) node[above,pos=0.5] {$\sigma_L$};
	\node at (L2-|x7) {$n-1$};
	\node at (L2-|x8) {$n$};
		 
	\draw[|-|] (L3-|x1) -- (L3-|x2) node[above,pos=0.5] {$\sigma_L'$};
	\node at (L3-|x3) {$n$};
	\node at (L3-|x4) {$n-1$};
	\draw[|-|] (L3-|x5) -- (L3-|x8) node[above,pos=0.5] {$\sigma_R'$}; 
	
	\draw[|-|,dashed] (L1-|x1) -- (L1-|x2) node[above,pos=0.5] {$\pi_L$};
	\draw[|-|,dashed] (L1-|x6) -- (L1-|x2) node[above,pos=0.5] {$\pi_R$};

\end{tikzpicture}
\caption{The two-line notation used to evaluate $(\sigma_L' n (n-1) \sigma_R') \circ (\sigma_L (n-1)n)^{-1}$. The second line represents $\sigma$ and the last line represents $\sigma'$.}
\label{fig1}
\end{figure}
        
        Now we show that inserting $n+1$ anywhere else in $\boldsymbol{\sigma}$ cannot produce a $3$-permutation that avoids $231$ and $312$. We must insert $n+1$ adjacent to $n$ in $\sigma$. If not, then inserting $n+1$ anywhere to the left of $n-1$ contains an occurrence of $312$. Similarly, we must insert $n+1$ left-adjacent to $n$ or at the end in $\sigma'$. Inserting $n+1$ anywhere to the right of $n-1$ contains an occurrence of $231$. 
        
        We now show that $(\sigma_L (n-1) (n+1) n, \sigma_L' (n+1) n (n-1)  \sigma_R')$ cannot avoid these patterns. 
        Similar to Figure \ref{fig1} above, we can subdivide $\sigma_L$ into $\pi_L$ and $\pi_R$, where $\pi_L$ is of the same size as $\sigma_L'$, and $\pi_R(n-1)$ is consecutively increasing.
        
\begin{center}
\begin{tikzpicture}[coo/.style={coordinate}]
	\path (0,0) coordinate (x1) --++ (2,0) coordinate (x2)
			--++ (0.8,0) coordinate (x3)
			--++ (0.8,0) coordinate (x4)
			--++ (0.8,0) coordinate (x5)
			--++ (0.8,0) coordinate (x6)
			--++ (0.8,0) coordinate (x7)
			--++ (0.8,0) coordinate (x8)
			--++ (1.2,0) coordinate (x9)
			--++ (0.8,0) coordinate (xx);
	\foreach \i in {1,2,3}
		\coordinate[] (L\i) at (0,-\i);

	\draw[|-|] (L2-|x1) -- (L2-|x7) node[above,pos=0.5] {$\sigma_L$};
	\node at (L2-|x8) {$n-1$};
	\node at (L2-|x9) {$n+1$};
	\node at (L2-|xx) {$n$};
		 
	\draw[|-|] (L3-|x1) -- (L3-|x2) node[above,pos=0.5] {$\sigma_L'$};
	\node at (L3-|x5) {$n-1$};
	\node at (L3-|x4) {$n$};
	\node at (L3-|x3) {$n+1$};
	\draw[|-|] (L3-|x6) -- (L3-|xx) node[above,pos=0.5] {$\sigma_R'$}; 
	
	\draw[|-|,dashed] (L1-|x1) -- (L1-|x2) node[above,pos=0.5] {$\pi_L$};
	\draw[|-|,dashed] (L1-|x7) -- (L1-|x2) node[above,pos=0.5] {$\pi_R$};

\end{tikzpicture}
\end{center}
        
        Then the projection is of the form $(\sigma_L' \circ \pi_L^{-1}) \pi (r+2) r (r+1)$, where $r$ is the minimal element of $(n-1)\sigma_R'$ and $\pi$ is a subpermutation. This contains an occurrence of $312$.
        
        A similar calculation shows that $\proj(\sigma_L (n-1) (n+1) n, \sigma_L' n (n-1)  \sigma_R' (n+1))$ is of the form $\pi (r+1) (n+1) r$ for a subpermutation $\pi$, which contains an occurrence of $231$. Hence, each instance of $\boldsymbol{\sigma}$ in this case contributes two new elements in $S_{n+1}^2(231,312)$.
    
        \item $(\sigma, \sigma')= (\sigma_L n (n-1) \sigma_R, \sigma_L' (n-1) n)$.
        
        Using a similar argument to the previous case, $n(n-1)\sigma_R$ must be consecutively decreasing. As in the previous cases, $(\sigma_L n (n-1) \sigma_R (n+1), \sigma_L' (n-1) n (n+1))$ avoids $231$ and $312$. Moreover, $(\sigma_L (n+1) n (n-1) \sigma_R, \sigma_L' (n-1) n (n+1))$ also avoids these patterns. To see this, we first evaluate the projection of $(\sigma_L n (n-1) \sigma_R, \sigma_L' (n-1) n)$, which we will call $\gamma$:
        
        \begin{center}
    \begin{tikzpicture}[coo/.style={coordinate}]
	\path (0,0) coordinate (x1) --++ (2,0) coordinate (x2)
			--++ (0.6,0) coordinate (x3)
			--++ (1.0,0) coordinate (x4)
			--++ (0.8,0) coordinate (x5)
			--++ (2,0) coordinate (x6)
			--++ (0.8,0) coordinate (x7)
			--++ (0.8,0) coordinate (xx);
	\foreach \i in {1,2,3}
		\coordinate[] (L\i) at (0,-\i);
		
	\draw[|-|] (L1-|x1) -- (L1-|x2) node[above,pos=0.5] {$\sigma_L$};
	\node at (L1-|x3) {$n$};
	\node at (L1-|x4) {$n-1$};
	\draw[|-|] (L1-|x5) -- (L1-|xx) node[above,pos=0.5] {$\sigma_R$};	
		 
	\draw[|-|] (L2-|x1) -- (L2-|x6) node[above,pos=0.5] {$\sigma_L'$};
	\node at (L2-|x7) {$n-1$};
	\node at (L2-|xx) {$n$};
	
	\draw[|-|,dashed] (L3-|x1) -- (L3-|x2) node[above,pos=0.5] {$\pi_L$};
	\draw[|-|,dashed] (L3-|x6) -- (L3-|x2) node[above,pos=0.5] {$\pi_R$};
	\node at (L3-|x4) { };

\end{tikzpicture}
\end{center}
        
        Since $\gamma$ is of the form $(\pi_L \circ \sigma_L^{-1}) n (n-1) \mathrm{rev}(\pi_R)$, we conclude that $n(n-1)\mathrm{rev}(\pi_R)$ must be consecutively decreasing to avoid occurrences of $231$ and $312$. We now evaluate the projection of $(\sigma_L (n+1) n (n-1) \sigma_R, \sigma_L' (n-1) n (n+1))$:
        
        \begin{center}
    \begin{tikzpicture}[coo/.style={coordinate}]
	\path (0,0) coordinate (x1) --++ (2,0) coordinate (x2)
			--++ (0.8,0) coordinate (x3)
			--++ (1.0,0) coordinate (x4)
			--++ (1.0,0) coordinate (x5)
			--++ (0.8,0) coordinate (x6)
			--++ (0.6,0) coordinate (x7)
			--++ (0.8,0) coordinate (x8)
			--++ (1.0,0) coordinate (x9)
			--++ (1.0,0) coordinate (xx);
	\foreach \i in {1,2,3}
		\coordinate[] (L\i) at (0,-\i);

	\draw[|-|] (L1-|x1) -- (L1-|x2) node[above,pos=0.5] {$\sigma_L$};
	\node at (L1-|x3) {$n+1$};
	\node at (L1-|x4) {$n$};
	\node at (L1-|x5) {$n-1$};
	\draw[|-|] (L1-|x6) -- (L1-|xx) node[above,pos=0.5] {$\sigma_R$};	
		 
	\draw[|-|] (L2-|x1) -- (L2-|x7) node[above,pos=0.5] {$\sigma_L'$};
	\node at (L2-|x8) {$n-1$};
	\node at (L2-|x9) {$n$};
	\node at (L2-|xx) {$n+1$};
	
	\draw[|-|,dashed] (L3-|x1) -- (L3-|x2) node[above,pos=0.5] {$\pi_L$};
	\draw[|-|,dashed] (L3-|x7) -- (L3-|x2) node[above,pos=0.5] {$\pi_R$};
	\node at (L3-|x8) { };

\end{tikzpicture}
\end{center}
        
        Now $\proj(\sigma_L (n+1) n (n-1) \sigma_R, \sigma_L' (n-1) n (n+1))$ is $(\pi_L \circ \sigma_L^{-1}) (n+1) n (n-1) \mathrm{rev}(\pi_R)$, which also avoids $231$ and $312$, as shown in Figure \ref{fig:231,132}.

        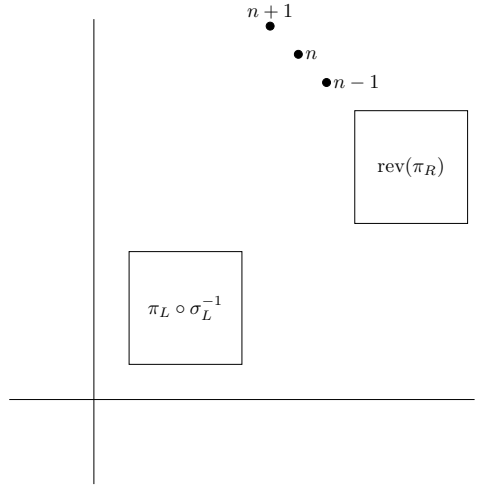
\begin{figure}[h]
    \centering
    \scalebox{0.75}{
    \begin{tikzpicture}
        \foreach [evaluate=\i as \x using int(\i-1)]\i in {0,1,...,10}
        {
            \foreach [evaluate=\j as \y using int(\j-1)] \j in {0,1,...,10}
            {
                \node at (\i,\j)[name=perm-\x-\y,]{};
            }
        }

        \draw ([xshift=-5mm]perm-1--1.south west)--([xshift=-5mm]perm-1-7.north west);
        \draw ([yshift=5mm]perm--1-0.south west)--([yshift=5mm]perm-7-0.south east);

        \draw[draw=black] (2,2) rectangle ++(2,2);
        \node at (3,3) {$\pi_L \circ \sigma_L^{-1}$};

        \filldraw [black] (4.5,8) circle (2pt) node[anchor=south]{$n+1$};

        \filldraw [black] (5,7.5) circle (2pt) node[anchor=west]{$n$};

        \filldraw [black] (5.5,7) circle (2pt) node[anchor=west]{$n-1$};

        \draw[draw=black] (6,4.5) rectangle ++(2,2);
        \node at (7,5.5) {$\rev(\pi_R)$};

\end{tikzpicture}
}
    \caption{An illustration of the projection of $(\sigma_L (n+1) n (n-1) \sigma_R, \sigma_L' (n-1) n (n+1))$.}
    \label{fig:231,132}
\end{figure}
        
        We show that inserting $n+1$ anywhere else in $\boldsymbol{\sigma}$ cannot produce a $3$-permutation that avoids $231$ and $312$. Using similar arguments to the previous case, it is sufficient to show that both $(\sigma_L (n+1) n (n-1) \sigma_R, \sigma_L' (n-1) (n+1) n)$ and $(\sigma_L n (n-1) \sigma_R (n+1), \sigma_L' (n-1) (n+1) n)$ contain an occurrence of $231$ or $312$.
        
        For the former $3$-permutation, we take $\proj(\sigma_L (n+1) n (n-1) \sigma_R, \sigma_L' (n-1) (n+1) n)$:
        
    \begin{center}
    \begin{tikzpicture}[coo/.style={coordinate}]
	\path (0,0) coordinate (x1) --++ (2,0) coordinate (x2)
			--++ (0.8,0) coordinate (x3)
			--++ (1.0,0) coordinate (x4)
			--++ (1.0,0) coordinate (x5)
			--++ (0.8,0) coordinate (x6)
			--++ (0.6,0) coordinate (x7)
			--++ (0.8,0) coordinate (x8)
			--++ (1.2,0) coordinate (x9)
			--++ (1.0,0) coordinate (xx);
	\foreach \i in {1,2,3}
		\coordinate[] (L\i) at (0,-\i);

	\draw[|-|] (L1-|x1) -- (L1-|x2) node[above,pos=0.5] {$\sigma_L$};
	\node at (L1-|x3) {$n+1$};
	\node at (L1-|x4) {$n$};
	\node at (L1-|x5) {$n-1$};
	\draw[|-|] (L1-|x6) -- (L1-|xx) node[above,pos=0.5] {$\sigma_R$};	
		 
	\draw[|-|] (L2-|x1) -- (L2-|x7) node[above,pos=0.5] {$\sigma_L'$};
	\node at (L2-|x8) {$n-1$};
	\node at (L2-|x9) {$n+1$};
	\node at (L2-|xx) {$n$};
	
	\draw[|-|,dashed] (L3-|x1) -- (L3-|x2) node[above,pos=0.5] {$\pi_L$};
	\draw[|-|,dashed] (L3-|x7) -- (L3-|x2) node[above,pos=0.5] {$\pi_R$};
	\node at (L3-|x8) { };

\end{tikzpicture}
\end{center}
        
        The projection is of the form $(\pi_L \circ \sigma_L^{-1}) n (n+1) (n-1) \mathrm{rev}(\pi_R)$, which contains an occurrence of $231$.
        
        For the latter $3$-permutation, a similar argument shows that this projection contains an occurrence of $312$. Hence, each instance of $\boldsymbol{\sigma}$ in this case contributes $2$ new elements in $S_{n+1}^2(231,312)$.
    
        \item $(\sigma, \sigma')= (\sigma_L n (n-1) \sigma_R, \sigma_L' n (n-1) \sigma_R')$.
        
        Then $n(n-1)\sigma_R$ and $n(n-1)\sigma_R'$ must be consecutively decreasing. We claim $|\sigma_R| = |\sigma_R'|$. For the sake of contradiction, suppose that $|\sigma_R'|>|\sigma_R|$. Then the $3$-permutations are of the following form:
        
    \begin{center}
    \begin{tikzpicture}[coo/.style={coordinate}]
	\path (0,0) coordinate (x1) --++ (2,0) coordinate (x2)
			--++ (0.6,0) coordinate (x3)
			--++ (1,0) coordinate (x4)
			--++ (0.8,0) coordinate (x5)
			--++ (2,0) coordinate (x6)
			--++ (0.6,0) coordinate (x7)
			--++ (1,0) coordinate (x8)
			--++ (0.8,0) coordinate (x9)
			--++ (2.5,0) coordinate (xx);
	\foreach \i in {1,2,3}
		\coordinate[] (L\i) at (0,-\i);

	\draw[|-|] (L1-|x1) -- (L1-|x6) node[above,pos=0.5] {$\sigma_L$};
	\node at (L1-|x7) {$n$};
	\node at (L1-|x8) {$n-1$};
	\draw[|-|] (L1-|x9) -- (L1-|xx) node[above,pos=0.5] {$\sigma_R$};	
		 
	\draw[|-|] (L2-|x1) -- (L2-|x2) node[above,pos=0.5] {$\sigma_L'$};
	\node at (L2-|x3) {$n$};
	\node at (L2-|x4) {$n-1$};
	\draw[|-|] (L2-|x5) -- (L2-|xx) node[above,pos=0.5] {$\sigma_R'$};

\end{tikzpicture}
\end{center}
        
        The projection, which we will call $\rho$, is of the form $\pi_1 n \pi_2 r \pi_3 (r+c)$, where $r$ is the minimal element of $\sigma_R'$, $c$ is some positive integer, and $\pi_1$, $\pi_2$, $\pi_3$ are subpermutations. Figure \ref{fig:231,132 p2} illustrates this projection. Hence, $\rho$ contains $312$, a contradiction. 

               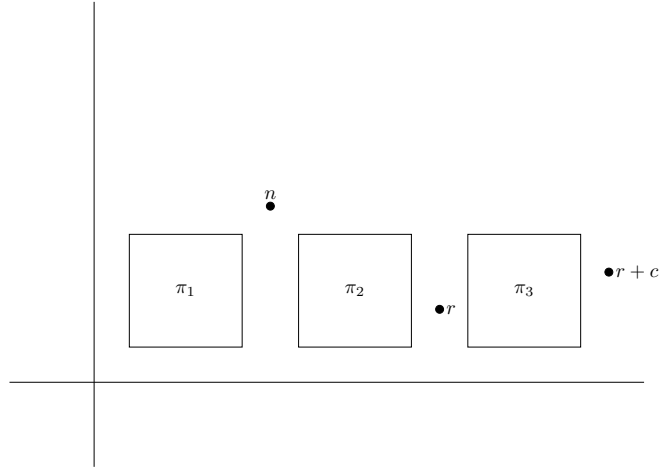
\begin{figure}[h]
    \centering
    \scalebox{0.75}{
    \begin{tikzpicture}
        \foreach [evaluate=\i as \x using int(\i-1)]\i in {0,1,...,11}
        {
            \foreach [evaluate=\j as \y using int(\j-1)] \j in {0,1,...,11}
            {
                \node at (\i,\j)[name=perm-\x-\y,]{};
            }
        }

        \draw ([xshift=-5mm]perm-1--1.south west)--([xshift=-5mm]perm-1-7.north west);
        \draw ([yshift=5mm]perm--1-0.south west)--([yshift=5mm]perm-10-0.south east);

        \draw[draw=black] (2,2) rectangle ++(2,2);
        \node at (3,3) {$\pi_1$};

        \filldraw [black] (4.5,4.5) circle (2pt) node[anchor=south]{$n$};

        \draw[draw=black] (5,2) rectangle ++(2,2);
        \node at (6,3) {$\pi_2$};

        \filldraw [black] (7.5,2.67) circle (2pt) node[anchor=west]{$r$};

        \draw[draw=black] (8,2) rectangle ++(2,2);
        \node at (9,3) {$\pi_3$};

        \filldraw [black] (10.5,3.33) circle (2pt) node[anchor=west]{$r+c$};

\end{tikzpicture}
}
    \caption{An illustration of the projection of $(\sigma_L n (n-1) \sigma_R, \sigma_L' n (n-1) \sigma_R')$ when $|\sigma_R'|>|\sigma_R|$.}
    \label{fig:231,132 p2}
\end{figure}
        
        A similar argument holds for $|\sigma_R'|<|\sigma_R|$. Hence, the two permutations must be of the same size. Moreover, since both $n(n-1)\sigma_R$ and $n(n-1)\sigma_R'$ are consecutively decreasing, then we have $\sigma_R = \sigma_R'$.
        
        We immediately see that $(\sigma_L n (n-1) \sigma_R (n+1), \sigma_L' n (n-1) \sigma_R' (n+1))$ is in $S_{n+1}^2 (231,312)$. Moreover, note that the projection of $(\sigma_L (n+1) n (n-1) \sigma_R, \sigma_L' (n+1) n (n-1) \sigma_R')$ is of the form $(\sigma_L' \circ \sigma_L^{-1}) (\sigma_R' \circ \sigma_R^{-1}) (n-1) n (n+1)$, and hence, $(\sigma_L (n+1) n (n-1) \sigma_R, \sigma_L' (n+1) n (n-1) \sigma_R')$ also avoids $231$ and $312$.
        
        Now we show that inserting the maximal element $n+1$ anywhere else cannot avoid $231$ and $312$. In fact, $n+1$ can only be inserted either at the end of $\sigma$ and $\sigma'$ or left-adjacent to $n$. If $n+1$ is inserted anywhere in $\sigma_L$ or $\sigma_L'$, then there would be an occurrence of $312$. If $n+1$ is inserted anywhere to the right of $n$ and not at the end of the permutation, then there would be an occurrence of $231$. 
        
        We show that both the $3$-permutations $(\sigma_L (n+1) n (n-1) \sigma_R, \sigma_L' n (n-1) \sigma_R' (n+1))$ and $(\sigma_L n (n-1) \sigma_R (n+1), \sigma_L' (n+1) n (n-1) \sigma_R')$ contain an occurrence of either $231$ or $312$.
        
        For the first $3$-permutation, the projection looks as follows:
        
            \begin{center}
    \begin{tikzpicture}[coo/.style={coordinate}]
	\path (0,0) coordinate (x1) --++ (2,0) coordinate (x2)
			--++ (0.8,0) coordinate (x3)
			--++ (0.8,0) coordinate (x4)
			--++ (0.8,0) coordinate (x5)
			--++ (0.8,0) coordinate (x6)
			--++ (0.8,0) coordinate (x7)
			--++ (0.8,0) coordinate (xx);
	\foreach \i in {1,2,3}
		\coordinate[] (L\i) at (0,-\i);

	\draw[|-|] (L1-|x1) -- (L1-|x2) node[above,pos=0.5] {$\sigma_L$};
	\node at (L1-|x5) {$n-1$};
	\node at (L1-|x4) {$n$};
	\node at (L1-|x3) {$n+1$};
	\draw[|-|] (L1-|x6) -- (L1-|xx) node[above,pos=0.5] {$\sigma_R$}; 
		 
	\draw[|-|] (L2-|x1) -- (L2-|x2) node[above,pos=0.5] {$\sigma_L'$};
	\node at (L2-|x4) {$n-1$};
	\node at (L2-|x3) {$n$};
	\node at (L2-|xx) {$n+1$};
	\draw[|-|] (L2-|x5) -- (L2-|x7) node[above,pos=0.5] {$\sigma_R'$};

\end{tikzpicture}
\end{center}
        
        Evaluating the projection gives the form $(\sigma_L' \circ \sigma_L^{-1})(n+1) \pi (n-1)(n)$ for a subpermutation~$\pi$, which contains $312$.
        
        A similar argument shows that $\proj(\sigma_L n (n-1) \sigma_R (n+1), \sigma_L' (n+1) n (n-1) \sigma_R')$ contains an occurrence of $231$. Therefore, each instance of $\boldsymbol{\sigma}$ in this case contributes $2$ new elements in $S_{n+1}^2(231,312)$. 
    \end{enumerate}

Now we show that $3$-permutations avoiding $231$ and $312$ must be of one of the forms above. We have only one form to consider, where exactly one of $\sigma_R$, $\sigma_R'$ is empty. Let $\sigma_R$ be empty and $\sigma_R'$ be nonempty. In particular, $(\sigma, \sigma')= (\sigma_L n, \sigma_L' n \sigma_R')$. 
    
    Now $n-1$ must be adjacent to $n$ in $\sigma'$. If $\sigma' = \sigma_L' (n-1) n \sigma_R'$, then $\sigma_R'$ must be empty to avoid an occurrence of $231$. Then Case 1 covers this. If $\sigma' = \sigma_L' n (n-1) \sigma_R'$, then we show that $n-1$ is adjacent to $n$ in $\sigma$. Suppose, for the sake of contradiction, that this is not the case. First, $n(n-1)\sigma_R'$ must be consecutively decreasing. Taking the projection $\proj (\sigma, \sigma')$, we conclude that it is of the form $\pi_L n \pi_R k r$, where $\pi_L$ and $\pi_R$ are subpermutations, $k \neq r+1$, and $r$ is the minimal element in $\sigma_R'$ (note that if $\sigma_R'$ is empty, then the projection contains the sequence $nk (n-1)$, which is a $312$ pattern). Now we consider where the element $r+1$ is in the permutation. If $r+1$ is in $\pi_L$, then there is an occurrence of $231$. If $r+1$ is in $\pi_R$, then if $k>r+1$, there is an occurrence of $231$ and if $k<r+1$, there is an occurrence of $312$. Hence, $n-1$ must be adjacent to $n$ in $\sigma$, and a similar argument from Case 2 covers this case. A similar argument also holds for nonempty $\sigma_R$ and empty $\sigma_R'$.

Now we show that for $(\sigma, \sigma') \in S_n^2 \setminus S_n^2(231,312)$, we cannot obtain an element in $ S_{n+1}^2(231,312)$ by inserting the maximal element $n+1$ anywhere in $\sigma$ and $\sigma'$. We will assume that $\sigma$ and $\sigma'$ avoid these patterns but $\sigma' \circ \sigma^{-1}$ does not. We iterate through the same cases as above:

\begin{enumerate}
    \item $(\sigma, \sigma') = (\sigma_L n, \sigma_L' n)$.
    
    It's straightforward to check that the projections of the following $3$-permutations contain $\sigma' \circ \sigma^{-1}$: $(\sigma_L n (n+1), \sigma_L' n (n+1))$, ${(\sigma_L n (n+1), \sigma_L' (n+1)n)}$, ${(\sigma_L (n+1) n, \sigma_L' n (n+1))},$ and $(\sigma_L (n+1)n, \sigma_L' (n+1)n)$.

    \item $(\sigma, \sigma')= (\sigma_L (n-1) n, \sigma_L' n (n-1)  \sigma_R')$.
    
    It is clear that $(\sigma_L (n-1) n (n+1), \sigma_L' n (n-1)  \sigma_R' (n+1))$ contains $\sigma' \circ \sigma^{-1}$. Now we consider $(\sigma_L (n-1) n (n+1), \sigma_L' (n+1) n (n-1)  \sigma_R')$. Call its projection $\rho$. Using the notation in Case 2 above, note that $\pi_R (n-1) n$ must be consecutively increasing. Else $\rho$ contains an occurrence of $231$ if $\pi_R (n-1) n$ contains $21$, and $\rho$ contains an occurrence of $312$ if $\pi_R (n-1) n$ is increasing but not consecutive. As such, $\rho$ must be of the form $(\sigma_L' \circ \pi_L^{-1}) (n+1) n (n-1) \sigma_R'$ and $\sigma' \circ \sigma^{-1}$ is of the form $(\sigma_L' \circ \pi_L^{-1}) n (n-1) \sigma_R'$. It is clear that $\rho$ contains $\sigma' \circ \sigma^{-1}$. Lastly, recall that inserting $n+1$ anywhere else in $\sigma$ and $\sigma'$ must result in a $3$-permutation that contains an occurrence of $231$ or $312$.

    \item $(\sigma, \sigma')= (\sigma_L n (n-1) \sigma_R, \sigma_L' (n-1) n)$.
    
    Using a similar logic to the previous case, $(\sigma_L n (n-1) \sigma_R (n+1), \sigma_L' (n-1) n (n+1))$ and $(\sigma_L (n+1) n (n-1) \sigma_R, \sigma_L' (n-1) n (n+1))$ contain $\sigma' \circ \sigma^{-1}$, and inserting $n+1$ anywhere else must result in a $3$-permutation that contains either $231$ or $312$.

    \item $(\sigma, \sigma')= (\sigma_L n (n-1) \sigma_R, \sigma_L' n (n-1) \sigma_R')$.
    
    Our logic in Case 4 above shows that $(\sigma_L n (n-1) \sigma_R (n+1), \sigma_L' n (n-1) \sigma_R' (n+1))$ and $(\sigma_L (n+1) n (n-1) \sigma_R, \sigma_L' (n+1) n (n-1) \sigma_R')$ contain $\sigma' \circ \sigma^{-1}$, and inserting $n+1$ anywhere else must result in a $3$-permutation that contains these patterns.
\end{enumerate}

However, we must also consider when exactly one of $\sigma_R$, $\sigma_R'$ is empty, but similar logic as presented in our proof shows either $(\sigma, \sigma')$ belongs to one of the cases we iterated above or inserting $n+1$ anywhere in $\sigma$ and $\sigma'$ results in a $3$-permutation that contains either pattern.

Therefore, we see that for every $3$-permutation $\boldsymbol{\sigma} = (\sigma, \sigma')$ in $S_n^2(231,312)$, inserting the maximal element $n+1$ onto the end of both $\sigma$ and $\sigma'$ always yields a $3$-permutation in $S_{n+1}^2(231,312)$; moreover, inserting the maximal element such that the relative positions of the two largest elements in both $\sigma$ and $\sigma'$ are preserved also always yields another $3$-permutation. This contributes $2a_n$ different $3$-permutations to $S_{n+1}^2(231,312)$. In the case that $\boldsymbol{\sigma}$ is of the form in Case 1 (where $\sigma$, $\sigma'$ each end with the maximal element $n$), each $\boldsymbol{\sigma}$ can construct two elements in $S_{n+1}^2(231,312)$ in addition to the elements generated above, and this case contributes $2a_{n-1}$ additional elements in $S_{n+1}^2(231,312)$. We have that \begin{align*}
    a_{n+1} = 2a_n + 2a_{n-1}. & \qedhere
\end{align*}
\end{proof}

\begin{theorem}\label{231,321}
Let $a_n = |S^2_n(231,321)|$. Then $a_n$ follows the formula $a_{n+1} = 4 \cdot 3^{n-1}$ (where $a_1 = 1$), which corresponds to the OEIS sequence \href{http://oeis.org/A003946}{A003946}.
\end{theorem}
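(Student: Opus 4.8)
The plan is to prove $a_{n+1}=3a_n$ for every $n\ge 2$, which, together with $a_2=4$ (all four elements of $S_2^2$ vacuously avoid every pattern of size $3$) and $a_1=1$, yields the claimed formula $a_{n+1}=4\cdot 3^{n-1}$. As in the preceding theorems of this section, I would argue constructively: every $\hat{\boldsymbol\sigma}=(\hat\sigma,\hat\sigma')\in S_{n+1}^2(231,321)$ arises from a unique $\boldsymbol\sigma=(\sigma,\sigma')\in S_n^2(231,321)$ by deleting the entry $n+1$ from both $\hat\sigma$ and $\hat\sigma'$ — one first checks this deletion always lands in $S_n^2(231,321)$, i.e.\ that the new third projection $\sigma'\circ\sigma^{-1}$ still avoids $231$ and $321$ — so $a_{n+1}=\sum_{\boldsymbol\sigma\in S_n^2(231,321)}f(\boldsymbol\sigma)$, where $f(\boldsymbol\sigma)$ counts the ways of inserting $n+1$ into $\sigma$ and into $\sigma'$ so that $\sigma$, $\sigma'$, and $\pi=\sigma'\circ\sigma^{-1}$ all stay $231,321$-avoiding; it then suffices to show $\sum f(\boldsymbol\sigma)=3a_n$.

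The backbone is the structural description: a word $\tau=\tau_1\cdots\tau_m$ avoids $231$ and $321$ iff $\tau=\tau_L\,m\,\tau_R$ with $\tau_R$ consecutively increasing, every entry of $\tau_L$ smaller than every entry of $\tau_R$, and $\tau_L$ again $231,321$-avoiding (a descent in $\tau_R$ creates $321$; an inversion between $\tau_L$ and $\tau_R$ creates $231$) — equivalently $\tau$ is a direct sum of blocks $j\,1\,2\cdots(j-1)$ — so in particular $\tau$ ends in $m$ or $m-1$. From this, the legal positions for inserting $n+1$ into $\sigma$ on its own are: at the very end, or immediately before a suffix of $\sigma$ of the form $j(j+1)\cdots n$; hence there are $\ell(\sigma)+1$ of them, where $\ell(\sigma)$ is the length of the longest consecutively increasing suffix of $\sigma$ ending in $n$ (and $\ell(\sigma)=0$ precisely when $\sigma$ ends in $n-1$), and likewise for $\sigma'$. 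The core computation is then a case analysis organized by the endings of $\sigma$ and $\sigma'$ (and, as a sub-division, the ending of $\pi$): for each admissible pair of insertion positions I would write out $\hat\pi$ in two-line notation, splitting $\hat\sigma$ into the portion carried onto $\hat\sigma'_L$ and the portion carried onto the increasing suffix — exactly as in the figures of the proof of Theorem~\ref{231,312} — and read off whether $\hat\pi$ avoids $231$ and $321$, using that in an avoiding $\hat\pi$ the entry $n+1$ is followed only by a (possibly empty) increasing run of largest values. Summing the surviving contributions, with the auxiliary counts that appear controlled via $|S_m^1(231,321)|=2^{m-1}$ (Simion and Schmidt \cite{simion1985restricted}) and short recurrences of their own, should deliver the total $3a_n$.

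The hard part will be the bookkeeping in this case analysis. Although $\sigma$ and $\sigma'$ individually admit $\ell(\sigma)+1$ and $\ell(\sigma')+1$ legal insertions of $n+1$, the demand that $\hat\pi$ also avoid $231$ and $321$ kills almost every combination, and which combinations survive depends on how $\sigma^{-1}$ interleaves the block decompositions of $\sigma$ and $\sigma'$ — equivalently, on the position of $n$ in $\sigma$ relative to its position in $\sigma'$ — so the fiber sizes $f(\boldsymbol\sigma)$ are genuinely non-constant (for $n=2$ they are $6,2,3,1$). Showing that they nonetheless aggregate to exactly $3a_n$ is the crux. A more structural route that I would pursue in parallel as a sanity check: $S^2_n(231,321)$ is closed under direct sums and each of its elements factors uniquely into $\oplus$-indecomposables, each still avoiding $231$ and $321$ (with $\oplus$-indecomposability equivalent to $\sigma$ and $\sigma'$ sharing no common splitting point); one then counts the $\oplus$-indecomposable members of $S^2_m(231,321)$, expecting $2m-1$ for every $m\ge 1$, and substitutes $C(x)=\sum_{m\ge 1}(2m-1)x^m=x(1+x)/(1-x)^2$ into the block-decomposition identity $A(x)=C(x)/\bigl(1-C(x)\bigr)$, obtaining $A(x)=x(1+x)/(1-3x)$, i.e.\ $a_1=1$ and $a_{n+1}=4\cdot 3^{n-1}$.
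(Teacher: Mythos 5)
Your main route breaks at its very first step: it is \emph{not} true that deleting the letter $n+1$ from both coordinates of an element of $S_{n+1}^2(231,321)$ lands in $S_n^2(231,321)$. Concretely, take $(3124,\,4123)$: both $3124$ and $4123$ avoid $231$ and $321$, and the projection $4123\circ 3124^{-1}=1243$ avoids them as well, so $(3124,4123)\in S_4^2(231,321)$; but deleting $4$ from both words gives $(312,123)$, whose projection is $123\circ 312^{-1}=231$. The reason is that when the two copies of $n+1$ sit at different positions, removing them from $\sigma$ and $\sigma'$ re-pairs the intermediate columns of the diagram, so the new projection is not a sub-pattern of the old one; the paper's remark that deleting the maximum preserves avoidance applies to each permutation separately, never to the third projection. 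Consequently $\sum_{\boldsymbol\sigma\in S_n^2(231,321)}f(\boldsymbol\sigma)$ counts only those elements of $S_{n+1}^2(231,321)$ whose double deletion stays in the class, and this undercounts already at $n=3$: the element $(3124,4123)$ has no preimage in $S_3^2(231,321)$, so your sum is at most $35<36=3a_3$, i.e.\ the identity $\sum f(\boldsymbol\sigma)=3a_n$ that you plan to establish is not merely unproved but false. (Your $n=2$ fiber computation $6+2+3+1=12$ could not detect this, since every element of $S_2^2$ avoids the patterns vacuously.) Note also that your single-permutation insertion count $\ell(\sigma)+1$ is the easy part; the completeness of the insertion scheme is exactly what needs proof, and for maximum-insertion it fails. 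The paper works instead with insertion of a new \emph{minimal} element, using that at most one letter precedes the $1$ in each coordinate, showing that $\boldsymbol\sigma$ with $\sigma(1)=1$ admits four insertions and $\boldsymbol\sigma$ with $\sigma(1)\neq 1$ admits two, and closing with an induction that exactly half of $S_n^2(231,321)$ has $\sigma(1)=1$, whence $a_{n+1}=3a_n$.

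Your parallel generating-function route is genuinely different from the paper's and its skeleton is sound: the class is closed under direct sums, occurrences of the sum-indecomposable patterns $231$, $321$ cannot straddle summands, factorization into $\oplus$-indecomposables is unique, and $C(x)=x(1+x)/(1-x)^2$ substituted into $A=C/(1-C)$ does give $x(1+x)/(1-3x)$, i.e.\ $a_{n+1}=4\cdot 3^{n-1}$. But as written it is a consistency check, not a proof: the entire content is the claim that $S_m^2(231,321)$ has exactly $2m-1$ $\oplus$-indecomposable elements, which you only state as an expectation. Establishing that count requires its own structural argument of essentially the same difficulty as the recurrence itself, so you would need either to supply it or to fall back on a (corrected) insertion argument along the paper's lines.
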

\begin{proof}

Since the pair of patterns $\{231,321\}$ is trivially Wilf-equivalent to the pair of patterns $\{312,321\}$, we will show the the above recurrence relation for when $a_n = |S^2_n(312,321)|$.

Let $\boldsymbol{\sigma} = (\sigma, \sigma') \in S_n^2(312,321)$ and let $\boldsymbol{\sigma}$ be of the form $(\sigma_L n \sigma_R, \sigma_L' n \sigma_R')$. Note that $\sigma_R$ and~$\sigma_R'$ either contain one element or are empty.

We insert the maximal element $n+1$ to the permutation. The element $n+1$ must be inserted at the end or second-to-end in both $\sigma$ and $\sigma'$. We have the following~cases:

\begin{enumerate}
    \item $(\sigma, \sigma') = (\sigma_L n, \sigma_L' n)$.
    
    We can see that $(\sigma_L n (n+1), \sigma_L' n (n+1))$, $(\sigma_L n (n+1), \sigma_L' (n+1) n)$, $(\sigma_L (n+1) n, \sigma_L' n (n+1))$, and $(\sigma_L (n+1) n, \sigma_L' (n+1) n)$ all avoid $312$ and $321$.
    
    A $3$-permutation $\boldsymbol{\sigma}$ in this case constructs $4$ distinct $3$-permutations in $S_{n+1}^2 (312,321)$.
    
    \item $(\sigma, \sigma') = (\sigma_L n, \sigma_L' n r')$ for some integer $r'$.
    
    Similarly to the previous case, we see that the $3$-permutations $(\sigma_L n (n+1), \sigma_L' n r' (n+1))$, $(\sigma_L n (n+1), \sigma_L' n (n+1) r')$, $(\sigma_L (n+1) n, \sigma_L' n r' (n+1))$, and $(\sigma_L (n+1) n, \sigma_L' n (n+1) r')$ all avoid $312$ and $321$.
    
    A $3$-permutation $\boldsymbol{\sigma}$ in this case also constructs $4$ distinct $3$-permutations in $S_{n+1}^2 (312,321)$.

    \item $(\sigma, \sigma') = (\sigma_L n r, \sigma_L' n)$ for some integer $r$.
    
    Appending the maximal element $n+1$ at the end of $\sigma$ and $\sigma'$ still avoids $312$ and $321$. In particular, $(\sigma_L n r (n+1), \sigma_L' n (n+1))$, as well as $(\sigma_L n (n+1) r, \sigma_R' (n+1) n)$, avoids these patterns.
    
    Now we show that inserting $n+1$ anywhere else must contain these patterns. In particular, note that $(\sigma_L n r (n+1), \sigma_L' (n+1) n)$ and $(\sigma_L n (n+1) r, \sigma_L' n (n+1))$ cannot avoid $312$ and $321$. For both $3$-permutations, the projection is of the form $\pi_L (n+1) \pi_R n$ for subpermutations $\pi_L$ and $\pi_R$ (where $\pi_R$ is nonempty). This contains an instance of $312$.
    
    Hence, $3$-permutations $\boldsymbol{\sigma}$ in this case produce two different elements in $S_{n+1}^2 (312,321)$.
    
    \item $(\sigma, \sigma') = (\sigma_L n r, \sigma_L' n r')$ for integers $r$, $r'$.
    
    As in the previous cases, note that the $3$-permutation $(\sigma_L n r (n+1), \sigma_L' n r' (n+1))$, as well as $(\sigma_L n (n+1) r, \sigma_L' n (n+1) r')$, avoids $312$ and $321$. 
    
    Now we show that inserting $n+1$ anywhere else in $\boldsymbol{\sigma}$ cannot avoid $312$ and $321$. In particular, we show that $(\sigma_L n r (n+1), \sigma_L' n (n+1) r')$ and $(\sigma_L n (n+1) r, \sigma_L' n r' (n+1))$ cannot avoid $312$ and $321$.
    
    For both $3$-permutations, the projection is $\pi_L (n+1) \pi_R n r$ for some subpermutations $\pi_L, \pi_R$. This contains an instance of $321$. And hence, $3$-permutations $\boldsymbol{\sigma}$ in this case construct $2$ distinct elements in $S_{n+1}^2 (312,321)$.
    
\end{enumerate}

Now we show that for $(\sigma,\sigma') \in S_n^2 \setminus S_{n}^2(312,321)$, we cannot obtain an element in $ S_{n+1}^2(312,321)$ by inserting the maximal element $n+1$ anywhere in $\sigma$ and $\sigma'$. So let $\sigma' \circ \sigma^{-1}$ contain an instance of $312$ or $321$. Then $(\sigma, \sigma')$ must be one of the cases above, and it is clear that the projections of the $3$-permutations we obtained by inserting $n+1$ contain $\sigma' \circ \sigma^{-1}$, which must also contain an instance of these patterns. Hence, for $(\sigma,\sigma') \in S_n^2 \setminus S_{n}^2(312,321)$, we cannot obtain an element in $ S_{n+1}^2(312,321)$ by inserting the maximal element $n+1$ anywhere in $\sigma$ and $\sigma'$.

Now we claim that in $S_n^2(312,321)$, exactly half of the elements $\boldsymbol{\sigma} = (\sigma, \sigma')$ satisfy $\sigma(n) = n$. The base case can be seen in $S_2^2(312,321)$. Then for our inductive step let us assume that this is the case for $S_{n-1}^2(312,321)$. We wish to show that this is true for $S_{n}^2(312,321)$. In each case above, exactly half of the $3$-permutations constructed have the property $\sigma(n)= n$ and the other half satisfy $\sigma(n) \neq n$, and via induction, exactly half of the elements in $S_n^2(312,321)$ satisfy $\sigma(n) = n$.

Note that if $\sigma(n) = n$, we are in Case 1 or Case 2, which contribute $4$ elements in $S_{n+1}^2(312,321)$. When $\sigma(n) \neq n$, we are in Case 3 or Case 4, which contribute $2$ elements in $S_{n+1}^2(312,321)$.

Thus, we conclude that $$a_{n+1} = \frac{a_n}{2}\cdot 4 + \frac{a_n}{2} \cdot 2 = 3a_n.$$

We can see that $a_2 = 4$, and we conclude that \begin{align*}
    a_{n+1} = 4 \cdot 3^{n-1}. & \qedhere
\end{align*}
\end{proof}

This allows us to prove all the conjectures Bonichon and Morel \cite{bonichon2022baxter} have made in regard to $3$-permutations avoiding two patterns of size $3$. However, there is one class of $3$-permutations that have yet to be classified, which we now enumerate. We begin with an observation.

\begin{observation}\label{inverselemma}
Let $\sigma$ be a permutation and $\pi$ be an involution. Then $\sigma$ avoids $\pi$ if and only if $\sigma^{-1}$ avoids $\pi$. 
\end{observation}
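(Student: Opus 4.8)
The plan is to prove the more general statement that for \emph{arbitrary} permutations $\sigma$ and $\tau$, the permutation $\sigma$ contains $\tau$ if and only if $\sigma^{-1}$ contains $\tau^{-1}$; the observation then follows immediately by specializing to $\tau = \pi$ and using $\pi^{-1} = \pi$. The underlying picture is that a permutation $\sigma$ is encoded by its diagram $\{(i,\sigma(i))\}$, and passing to $\sigma^{-1}$ reflects this point set across the line $y=x$, interchanging the roles of ``position'' and ``value.'' An occurrence of a pattern is a choice of $k$ points whose relative order in both coordinates matches the diagram of $\tau$; reflecting such a choice across $y=x$ produces $k$ points realizing the reflected diagram, which is exactly $\tau^{-1}$.

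Concretely, I would argue as follows. Suppose $\sigma \in S_n$ contains $\tau \in S_k$ at positions $c_1 < \dots < c_k$, so that $\sigma(c_1)\cdots\sigma(c_k)$ is order-isomorphic to $\tau$. Let $d_1 < \dots < d_k$ be the increasing rearrangement of the values $\sigma(c_1),\dots,\sigma(c_k)$. I claim the positions $d_1 < \dots < d_k$ witness an occurrence of $\tau^{-1}$ in $\sigma^{-1}$. Fix $m$ and set $i = \tau^{-1}(m)$; since $\tau(i)=m$, the value $\sigma(c_i)$ is the $m$-th smallest among the selected values, so $\sigma(c_i)=d_m$ and hence $\sigma^{-1}(d_m)=c_i$. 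Therefore the word $\sigma^{-1}(d_1)\cdots\sigma^{-1}(d_k)$ equals $c_{\tau^{-1}(1)}\,c_{\tau^{-1}(2)}\cdots c_{\tau^{-1}(k)}$, and because $c_1 < \dots < c_k$, the rank of its $m$-th letter is exactly $\tau^{-1}(m)$; that is, the word is order-isomorphic to $\tau^{-1}$. The converse is the same argument with $\sigma$ and $\sigma^{-1}$ swapped, using $(\sigma^{-1})^{-1}=\sigma$ and $(\tau^{-1})^{-1}=\tau$.

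Finally I would apply this equivalence with $\tau = \pi$. Since $\pi$ is an involution, $\pi^{-1}=\pi$, so $\sigma$ contains $\pi$ if and only if $\sigma^{-1}$ contains $\pi$; taking contrapositives yields that $\sigma$ avoids $\pi$ if and only if $\sigma^{-1}$ avoids $\pi$. The only even mildly delicate point is the rank bookkeeping in the order-isomorphism check, and once the ``reflection across the diagonal'' description is in place this is entirely routine, so I do not anticipate any genuine obstacle.
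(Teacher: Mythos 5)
Your proof is correct: the rank bookkeeping showing that $\sigma$ contains $\tau$ if and only if $\sigma^{-1}$ contains $\tau^{-1}$ checks out, and specializing to an involution $\pi$ with $\pi^{-1}=\pi$ gives the observation. The paper states this observation without proof, treating it as the well-known ``reflect the diagram across the line $y=x$'' fact, and your argument is precisely the standard justification it implicitly relies on, so there is nothing to add.
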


Since $132$ and $213$ are both involutions, $\sigma$ avoids $132$ if and only if $\sigma^{-1}$ avoids $132$. The same reasoning holds for the pattern $213$. We then have a corollary:

\begin{corollary}\label{coro}
    Let $\pi$ be an involution. Then the $3$-permutation $(\sigma, \sigma')$ avoids $\pi$ if and only if the $3$-permutation $(\sigma',\sigma)$ avoids $\pi$.
\end{corollary}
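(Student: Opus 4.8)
The plan is to reduce the statement directly to Observation~\ref{inverselemma} using the description of the direct projections of a $3$-permutation. First I would invoke the Remark following the definition of projections: the only direct projections of dimension $2$ of a $3$-permutation $(\sigma,\sigma')$ are $\sigma$, $\sigma'$, and $\sigma'\circ\sigma^{-1}$. Since an involution $\pi$ is in particular a $2$-permutation, the definition of pattern containment for $d$-permutations says that $(\sigma,\sigma')$ avoids $\pi$ if and only if each of the three permutations $\sigma$, $\sigma'$, and $\sigma'\circ\sigma^{-1}$ avoids $\pi$.

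Next I would apply the same observation to the swapped pair $(\sigma',\sigma)$: its three direct $2$-dimensional projections are $\sigma'$, $\sigma$, and $\sigma\circ\sigma'^{-1}$. The first two coincide with the projections of $(\sigma,\sigma')$, while the third satisfies $\sigma\circ\sigma'^{-1}=(\sigma'\circ\sigma^{-1})^{-1}$. Hence $(\sigma',\sigma)$ avoids $\pi$ if and only if $\sigma$, $\sigma'$, and $(\sigma'\circ\sigma^{-1})^{-1}$ all avoid $\pi$. The one substantive step is then to compare the condition ``$\sigma'\circ\sigma^{-1}$ avoids $\pi$'' with ``$(\sigma'\circ\sigma^{-1})^{-1}$ avoids $\pi$''; these are equivalent by Observation~\ref{inverselemma} applied to the single permutation $\tau:=\sigma'\circ\sigma^{-1}$, using the hypothesis that $\pi$ is an involution. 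Chaining the three equivalences --- $(\sigma,\sigma')$ avoids $\pi$ iff $\sigma$, $\sigma'$, $\sigma'\circ\sigma^{-1}$ all avoid $\pi$ iff $\sigma$, $\sigma'$, $\sigma\circ\sigma'^{-1}$ all avoid $\pi$ iff $(\sigma',\sigma)$ avoids $\pi$ --- completes the argument.

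Since the result is essentially a repackaging of Observation~\ref{inverselemma}, I do not anticipate a genuine obstacle. The only point requiring care is the bookkeeping of the projections, in particular the observation that swapping the two coordinates of the $3$-permutation replaces the mixed projection $\sigma'\circ\sigma^{-1}$ precisely by its group-theoretic inverse, so that no projection other than this one changes and the inverse-invariance of $\pi$-avoidance is exactly what is needed.
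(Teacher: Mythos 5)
Your proposal is correct and follows essentially the same route as the paper: the paper's one-line justification is exactly that $\sigma'\circ\sigma^{-1}$ avoids $\pi$ if and only if $\sigma\circ(\sigma')^{-1}$ avoids $\pi$, which is Observation~\ref{inverselemma} applied to $\tau=\sigma'\circ\sigma^{-1}$, and your projection bookkeeping just spells out the reduction the paper leaves implicit.
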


This is due to the fact that $\sigma' \circ \sigma^{-1}$ avoids $\pi$ if and only if $\sigma \circ (\sigma')^{-1}$ avoids $\pi$.

\begin{theorem}\label{132,213}
Let $a_n = |S_n^2(132,213)|$. Then $a_n$ satisfies the recurrence  $$a_{n+1} = a_n + 3 \cdot 2^{n-1} +2(n-1)$$ with the initial term $a_1 = 1$. This corresponds to the OEIS sequence \href{http://oeis.org/A356728}{A356728}.
\end{theorem}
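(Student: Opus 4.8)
The plan is to prove the recurrence by the same inductive, constructive method used in the earlier theorems: pass from $S_n^2(132,213)$ to $S_{n+1}^2(132,213)$ by inserting the new maximal entry $n+1$ into both coordinate permutations. Deleting the maximal entry preserves avoidance of $132$ and $213$ in $\sigma$, in $\sigma'$, and in the projection $\sigma'\circ\sigma^{-1}$, so this realizes $S_{n+1}^2(132,213)$ as a disjoint union of fibers over $S_n^2(132,213)$ and gives $a_{n+1}=\sum_{\boldsymbol\tau\in S_n^2(132,213)}c(\boldsymbol\tau)$, where $c(\boldsymbol\tau)$ is the number of pairs of insertion positions $(p,q)$ that keep both patterns avoided.

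First I would set up a structural description of $S_n^1(132,213)$: tracking the position of the maximal entry and iterating shows that a permutation avoids $132$ and $213$ if and only if it is a concatenation $B_1B_2\cdots B_k$ of consecutively-increasing blocks in which $B_1$ is made of the top $|B_1|$ values, $B_2$ of the next $|B_2|$ values, and so on. Equivalently $S_n^1(132,213)$ is in bijection with compositions of $n$, which recovers the count $|S_n^1(132,213)|=2^{n-1}$ due to Simion and Schmidt. The key consequence is that $n+1$ may be inserted into such a permutation without creating $132$ or $213$ in exactly two ways: at the very front, or immediately after the entry $n$ (the end of the first block; for $\Id_n$ this is the end of the word). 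So each of $p$ and $q$ has two admissible values, leaving four candidate constructions for each $\boldsymbol\tau$, which I name (A) front/front, (B) front/after-$n$, (C) after-$n$/front, and (D) after-$n$/after-$n$; by Corollary~\ref{coro} the number of $\boldsymbol\tau$ for which (B) works equals the number for which (C) works.

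The heart of the argument is then to determine, for each of (A)--(D), exactly which $\boldsymbol\tau$ give a $3$-permutation whose projection also avoids $132$ and $213$. I expect the outcome to be: (D) always works, so it contributes $a_n$; (A) inserts $n+1$ at the end of the projection, and this preserves avoidance exactly when the projection is the identity, i.e.\ when $\sigma=\sigma'$, contributing $2^{n-1}$; and (B) works exactly when $\sigma'=\Id_n$ (for any $\sigma$, using that $(n+1)\sigma$ is again a stacked-block permutation together with Observation~\ref{inverselemma}) or when $\sigma=\Id_n$ and $\sigma'$ has exactly two blocks, contributing $2^{n-1}+(n-1)$, with (C) contributing the same by the symmetry above. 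Summing, $a_{n+1}=a_n+2^{n-1}+2\bigl(2^{n-1}+(n-1)\bigr)=a_n+3\cdot 2^{n-1}+2(n-1)$, and the base case $a_1=1$ is immediate; solving the recurrence reproduces the tabulated sequence.

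The step I expect to be the main obstacle is the projection bookkeeping in cases (B) and (D). When $n+1$ is inserted at different positions in the two coordinates it lands inside $\sigma'\circ\sigma^{-1}$ and permutes the entries near the block boundaries of $\sigma$ and $\sigma'$, so pinning down the precise avoidance conditions --- especially showing that (B) fails whenever neither coordinate equals $\Id_n$, and that (D) never fails --- requires carefully propagating the composition description of $\sigma$, $\sigma'$, and $\sigma'\circ\sigma^{-1}$ through the insertion, and it is exactly this that forces the constants $3\cdot 2^{n-1}$ and $2(n-1)$.
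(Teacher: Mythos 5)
Your proposal is correct and takes essentially the same approach as the paper: insert the maximal element $n+1$, observe that each coordinate admits exactly two safe slots (the front or immediately after $n$), and decide via the projection which of the four slot pairs survive, with Corollary \ref{coro} handling the symmetric case; your predicted characterizations (D always; A exactly when $\sigma=\sigma'$; B exactly when $\sigma'=\Id_n$, or $\sigma=\Id_n$ with $\sigma'$ a two-block permutation) and the resulting counts $a_n$, $2^{n-1}$, and $2^{n-1}+(n-1)$ each agree precisely with the paper's case analysis. The projection bookkeeping you defer in cases (B) and (D) is exactly what the paper's two-line diagram computations carry out, and those computations confirm your stated outcomes.
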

\begin{proof}

Let $\boldsymbol{\sigma} = (\sigma, \sigma') \in S_n^2(132,213)$ and let $\boldsymbol{\sigma}$ be of the form $(\sigma_L n \sigma_R, \sigma_L' n \sigma_R')$.

Note that $\sigma_L n$ and $\sigma_L' n$ are increasing; otherwise the permutation would contain an occurrence of $213$. Moreover, they must be consecutively increasing; otherwise we would have an occurrence of $132$.

Adding the maximal element $n+1$ right-adjacent to $n$ in both $\sigma$ and $\sigma'$ always produces a $3$-permutation in $S_{n+1}^2(132,213)$. To see this, suppose that $|\sigma_L| > |\sigma_L'|$. Then the projection  would look as follows:

\begin{center}
    \begin{tikzpicture}[coo/.style={coordinate}]
	\path (0,0) coordinate (x1) --++ (2,0) coordinate (x2)
			--++ (0.6,0) coordinate (x3)
			--++ (0.6,0) coordinate (x4)
			--++ (0.8,0) coordinate (x5)
			--++ (2,0) coordinate (x6)
			--++ (0.6,0) coordinate (x7)
			--++ (0.6,0) coordinate (x8)
			--++ (0.8,0) coordinate (x9)
			--++ (2.5,0) coordinate (xx);
	\foreach \i in {1,2,3}
		\coordinate[] (L\i) at (0,-\i);

	\draw[|-|] (L1-|x1) -- (L1-|x6) node[above,pos=0.5] {$\sigma_L$};
	\node at (L1-|x7) {$n$};
	\draw[|-|] (L1-|x8) -- (L1-|xx) node[above,pos=0.5] {$\sigma_R$};	
		 
	\draw[|-|] (L2-|x1) -- (L2-|x2) node[above,pos=0.5] {$\sigma_L'$};
	\node at (L2-|x3) {$n$};
	\draw[|-|] (L2-|x4) -- (L2-|xx) node[above,pos=0.5] {$\sigma_R'$}; 
	
	\draw[|-|,dashed] (L3-|x1) -- (L3-|x4) node[above,pos=0.5] {$\pi_L$};
	\draw[|-|,dashed] (L3-|xx) -- (L3-|x8) node[above,pos=0.5] {$\pi_R$};
	\draw[|-|,dashed] (L3-|x4) -- (L3-|x8) node[above,pos=0.5] {$\pi_M$};

\end{tikzpicture}
\end{center}

This has the form $(\pi_R \circ \sigma_R^{-1}) \pi_L \pi_M$, where $\pi_L$ is consecutively increasing and ends with $n$. Figure \ref{fig:132,213} illustrates this projection. This projection must avoid $132$ and $213$.

    \begin{figure}[h]
    \centering
    \scalebox{0.75}{
    \begin{tikzpicture}
        \foreach [evaluate=\i as \x using int(\i-1)]\i in {0,1,...,11}
        {
            \foreach [evaluate=\j as \y using int(\j-1)] \j in {0,1,...,11}
            {
                \node at (\i,\j)[name=perm-\x-\y,]{};
            }
        }

        \draw ([xshift=-5mm]perm-1--1.south west)--([xshift=-5mm]perm-1-7.north west);
        \draw ([yshift=5mm]perm--1-0.south west)--([yshift=5mm]perm-10-0.south east);

        \draw[draw=black] (2,2) rectangle ++(2,2);
        \node at (3,3) {$\pi_R \circ \sigma_R^{-1}$};

        \draw[draw=black] (5,5) rectangle ++(2,2);
        \node at (6,6) {$\pi_L$};

        \draw[draw=black] (8,2) rectangle ++(2,2);
        \node at (9,3) {$\pi_M$};

\end{tikzpicture}
}
    \caption{An illustration of the projection of $(\sigma_L n \sigma_R, \sigma_L' n \sigma_R')$ when $|\sigma_L|>|\sigma_L'|$.}
    \label{fig:132,213}
\end{figure}

Now consider $(\sigma_1, \sigma_2) = (\sigma_L n (n+1) \sigma_R, \sigma_L' n (n+1) \sigma_R')$. The projection would look as follows:

\begin{center}
    \begin{tikzpicture}[coo/.style={coordinate}]
	\path (0,0) coordinate (x1) --++ (2,0) coordinate (x2)
			--++ (0.6,0) coordinate (x3)
			--++ (1,0) coordinate (x4)
			--++ (0.8,0) coordinate (x5)
			--++ (2,0) coordinate (x6)
			--++ (0.6,0) coordinate (x7)
			--++ (1,0) coordinate (x8)
			--++ (0.8,0) coordinate (x9)
			--++ (2.5,0) coordinate (xx);
	\foreach \i in {1,2,3}
		\coordinate[] (L\i) at (0,-\i);

	\draw[|-|] (L1-|x1) -- (L1-|x6) node[above,pos=0.5] {$\sigma_L$};
	\node at (L1-|x7) {$n$};
	\node at (L1-|x8) {$n+1$};
	\draw[|-|] (L1-|x9) -- (L1-|xx) node[above,pos=0.5] {$\sigma_R$};	
		 
	\draw[|-|] (L2-|x1) -- (L2-|x2) node[above,pos=0.5] {$\sigma_L'$};
	\node at (L2-|x3) {$n$};
	\node at (L2-|x4) {$n+1$};
	\draw[|-|] (L2-|x5) -- (L2-|xx) node[above,pos=0.5] {$\sigma_R'$}; 
	
	\draw[|-|,dashed] (L3-|x1) -- (L3-|x4) node[above,pos=0.5] {$\pi_L$};
	\draw[|-|,dashed] (L3-|xx) -- (L3-|x9) node[above,pos=0.5] {$\pi_R$};
	\draw[|-|,dashed] (L3-|x4) -- (L3-|x9) node[above,pos=0.5] {$\pi_M$};

\end{tikzpicture}
\end{center}

This has the form $(\pi_R \circ \sigma_R^{-1}) \pi_L (n+1) \pi_M$, which still avoids $132$ and $213$. The case where $|\sigma_L| = |\sigma_L'|$ follows as well. For the case $|\sigma_L| < |\sigma_L'|$, we utilize Corollary \ref{coro}. Note that due to symmetry, our previous argument implies that the $3$-permutation $(\sigma',\sigma)$ avoids $132$ and $213$, and hence Corollary \ref{coro} states that $(\sigma, \sigma')$ avoids these patterns. 

Hence, appending the maximal element $n+1$ right-adjacent to $n$ in both $\sigma$ and $\sigma'$ contributes $a_{n}$ elements in $S^2_{n+1}(132,213).$

In the following cases, note that $n+1$ must be inserted either at the beginning or right-adjacent to $n$ in $\sigma$ and $\sigma'$. If we insert $n+1$ to the left of $n$ (but not at the beginning), then there is an instance of $132$. Similarly, if we insert $n+1$ to the right of $n$ (but not adjacent to $n$), then there is an instance of $213$. We have the following:

\begin{enumerate}
    \item $\sigma = \sigma'$.
    
    Then $((n+1) \sigma, (n+1) \sigma')$ also avoids $132$ and $213$. Further, note that in the special case when $\sigma = \sigma' = \Id_n$, then $((n+1)\Id_n, \Id_n (n+1))$ and $(\Id_n (n+1), (n+1) \Id_n)$ both avoid $132$ and $213$. Now we show that inserting $n+1$ anywhere else in $\boldsymbol{\sigma}$ does not avoid $132$ and $213$. Specifically, for all other $\sigma$ and $\sigma'$, we show that $((n+1)\sigma, \sigma_L' n (n+1) \sigma_R')$ and $(\sigma_L n (n+1) \sigma_R, (n+1) \sigma')$ cannot avoid $132$ or $213$. To see that the first $3$-permutation cannot avoid $132$ or $213$, note that $\proj((n+1)\sigma, \sigma_L' n (n+1) \sigma_R')$ is of the form $1\pi(n+1)\ell$, where $\ell$ is the first element in $\sigma_L' n$ and $\pi$ is a subpermutation. This contains an occurrence of $132$. A similar argument shows that the projection of the latter $3$-permutation also contains $213$. 
    
    Since Simion and Schmidt \cite{simion1985restricted} showed there are $2^{n-1}$ possible permutations that avoid $132$ and $213$ with size $n$, this contributes an additional $2^{n-1} + 2$ elements in $S_{n+1}^2(132,213)$.
    
    \item $\sigma = \Id_n$ and $\sigma' \neq \Id_n$.
    
    We note that $(\sigma (n+1), (n+1) \sigma')$ avoids $132$ and $213$. In the special case where $\sigma_R'$ is consecutively increasing, then $((n+1) \sigma, \sigma_L' n (n+1) \sigma_R')$ is also an element in $S_{n+1}^2(132,213)$.
    
    Now we show that inserting $n+1$ anywhere else in $\boldsymbol{\sigma}$ cannot avoid $132$ and $213$. We first show that $(\sigma (n+1), \sigma' (n+1))$ and $((n+1)\sigma, (n+1) \sigma')$ cannot avoid $132$ and $213$. Taking the projection of these $3$-permutations evaluates to $\sigma' (n+1)$, which contains an occurrence of $213$ because $\sigma'$ is not the identity and therefore, must contain an occurrence of $21$.
    
    Note that $((n+1) \sigma, \sigma' (n+1))$ cannot avoid these patterns because $\sigma' (n+1)$ contains an occurrence of $213$.
    
    Now let $\sigma_R'$ be decreasing. We wish to show that $((n+1) \sigma, \sigma_L' n (n+1) \sigma_R')$ cannot avoid $132$ and $213$. Since $\sigma_R'$ contains an instance of $21$ and every element in $\sigma_R'$ is smaller than every element in $\sigma_L'$, taking $\proj((n+1) \sigma, \sigma_L' n (n+1) \sigma_R')$ gives an occurrence of $213$.
    
    Since there are $2^{n-1}$ different $\sigma'$ that avoid $132$ and $213$, note that $(\sigma (n+1), (n+1) \sigma')$ contributes $2^{n-1}-1$ different elements to $S_{n+1}^2(132,213)$. Moreover, the special 3-permutation case $((n+1) \sigma, \sigma_L' n (n+1) \sigma_R')$ contributes $n-1$ elements to $S_{n+1}^2(132,213)$.
    
    \item $\sigma \neq \Id_n$ and $\sigma' = \Id_n$.
    
    This case also contributes $2^{n-1}+n-2$ elements in $S_{n+1}^2(132,213)$. This is a consequence of Corollary \ref{coro} and the reasoning discussed in Case 2. 
    
\end{enumerate}

Now we show that nothing else can contribute to $S_{n+1}^2(132,213)$. Let $\boldsymbol{\sigma} = (\sigma_L n \sigma_R, \sigma_L' n \sigma_R')$ and assume that $\sigma_R$ and $\sigma_R'$ are nonempty and $\sigma \neq \sigma'$. Note that $\sigma$ and $\sigma'$ cannot be $\Id_n.$

Inserting $n+1$ at the beginning of $\sigma$ and $\sigma'$ gives that $\proj(\sigma, \sigma')$ is of the form $(\sigma' \circ \sigma^{-1}) (n+1)$. And since $\sigma \neq \sigma'$, then $\sigma' \circ \sigma^{-1}$ cannot be the identity and contains an occurrence of $21$. Hence $\proj(\sigma, \sigma')$ contains an occurrence of $213$.

We show that $((n+1) \sigma, \sigma_L' n (n+1) \sigma_R')$ cannot avoid $132$ and $213$ either. To see this, let $|\sigma_L| \geq |\sigma_L'|$. Then we evaluate the projection:

\begin{center}
    \begin{tikzpicture}[coo/.style={coordinate}]
	\path (0,0) coordinate (x1) --++ (1.1,0) coordinate (x2)
			--++ (2,0) coordinate (x3)
			--++ (0.6,0) coordinate (x4)
			--++ (0.8,0) coordinate (x5)
			--++ (0.8,0) coordinate (x6)
			--++ (0.6,0) coordinate (x7)
			--++ (0.6,0) coordinate (x8)
			--++ (0.6,0) coordinate (x9)
			--++ (2.5,0) coordinate (xx);
	\foreach \i in {1,2,3}
		\coordinate[] (L\i) at (0,-\i);

	\draw[|-|] (L1-|x2) -- (L1-|x7) node[above,pos=0.5] {$\sigma_L$};
	\node at (L1-|x8) {$n$};
	\node at (L1-|x1) { \ \ \ \ \ $n+1$};
	\draw[|-|] (L1-|x9) -- (L1-|xx) node[above,pos=0.5] {$\sigma_R$};	
		 
	\draw[|-|] (L2-|x1) -- (L2-|x3) node[above,pos=0.5] {$\sigma_L'$};
	\node at (L2-|x4) {$n$};
	\node at (L2-|x5) {$n+1$};
	\draw[|-|] (L2-|x6) -- (L2-|xx) node[above,pos=0.5] {$\sigma_R'$};

\end{tikzpicture}
\end{center}

This can be represented as $r \pi_L (n+1) \pi_R \ell$, where $\ell$ is an element in $\sigma_L'$ (or $n$ if $\sigma_L'$ is empty), $r$ is an element in $\sigma_R'$, and $\pi_L$ and $\pi_R$ are subpermutations. Since elements in $\sigma_L'$ are greater than elements in $\sigma_R'$, then the projection contains an occurrence of $132$. Figure \ref{fig:132,213 p2} illustrates an example.

    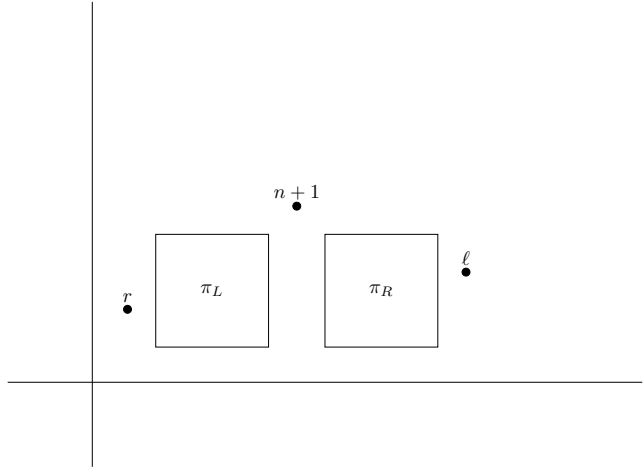
\begin{figure}[h]
    \centering
    \scalebox{0.75}{
    \begin{tikzpicture}
        \foreach [evaluate=\i as \x using int(\i-1)]\i in {0,1,...,11}
        {
            \foreach [evaluate=\j as \y using int(\j-1)] \j in {0,1,...,11}
            {
                \node at (\i,\j)[name=perm-\x-\y,]{};
            }
        }

        \draw ([xshift=-5mm]perm-1--1.south west)--([xshift=-5mm]perm-1-7.north west);
        \draw ([yshift=5mm]perm--1-0.south west)--([yshift=5mm]perm-10-0.south east);

        \filldraw [black] (2,2.67) circle (2pt) node[anchor=south]{$r$};
        
        \draw[draw=black] (2.5,2) rectangle ++(2,2);
        \node at (3.5,3) {$\pi_L$};
        
        \filldraw [black] (5,4.5) circle (2pt) node[anchor=south]{$n+1$};

        \draw[draw=black] (5.5,2) rectangle ++(2,2);
        \node at (6.5,3) {$\pi_R$};

        \filldraw [black] (8,3.33) circle (2pt) node[anchor=south]{$\ell$};

\end{tikzpicture}
}
    \caption{An illustration of $\proj((n+1) \sigma, \sigma_L' n (n+1) \sigma_R')$ when $|\sigma_L| \geq |\sigma_L'|$.}
    \label{fig:132,213 p2}
\end{figure}

Now let $|\sigma_L|<|\sigma_L'|$. To show the $3$-permutation $ (\sigma_1, \sigma_2) = ((n+1) \sigma, \sigma_L' n (n+1) \sigma_R')$ cannot avoid $132$ and $213$, we show that $\proj(\sigma_1, \sigma_2)$ contains either pattern. By Observation \ref{inverselemma}, this is equivalent to showing that $\proj(\sigma_2, \sigma_1)$ contains the pattern $132$ or $213$. We will show that the $3$-permutation $( \sigma_L n (n+1) \sigma_R, (n+1) \sigma')$ cannot avoid $132$ and $213$ when $|\sigma_L| > |\sigma_L'|$. Then we evaluate its projection:

\begin{center}
    \begin{tikzpicture}[coo/.style={coordinate}]
	\path (0,0) coordinate (x1) --++ (1.1,0) coordinate (x2)
			--++ (2,0) coordinate (x3)
			--++ (0.6,0) coordinate (x4)
			--++ (0.6,0) coordinate (x5)
			--++ (2,0) coordinate (x6)
			--++ (0.6,0) coordinate (x7)
			--++ (1,0) coordinate (x8)
			--++ (0.8,0) coordinate (x9)
			--++ (2.5,0) coordinate (xx);
	\foreach \i in {1,2,3}
		\coordinate[] (L\i) at (0,-\i);

	\draw[|-|] (L1-|x1) -- (L1-|x6) node[above,pos=0.5] {$\sigma_L$};
	\node at (L1-|x7) {$n$};
	\node at (L1-|x8) {$n+1$};
	\draw[|-|] (L1-|x9) -- (L1-|xx) node[above,pos=0.5] {$\sigma_R$};	
		 
	\draw[|-|] (L2-|x2) -- (L2-|x3) node[above,pos=0.5] {$\sigma_L'$};
	\node at (L2-|x4) {$n$};
	\node at (L2-|x1) {\ \ \ \ \ $n+1$};
	\draw[|-|] (L2-|x5) -- (L2-|xx) node[above,pos=0.5] {$\sigma_R'$};

\end{tikzpicture}
\end{center}

This is of the form $(\sigma_R' \circ \sigma_R^{-1}) (n+1) \pi_L n \pi_R$, where $\pi_L$ and $\pi_R$ are subpermutations. Since $\sigma_R' \circ \sigma_R^{-1}$ must contain the minimal element $1$, the projection contains an occurrence of $132$. And hence,  $((n+1) \sigma, \sigma_L' n (n+1) \sigma_R')$ cannot avoid $132$ and $213$.

By Corollary \ref{coro}, the $3$-permutation $((n+1) \sigma, \sigma_L' n (n+1) \sigma_R')$ cannot avoid $132$ and $213$ either.

Now we show that for $(\sigma, \sigma') \in S_n^2 \setminus S_n^2(132,213)$, we cannot obtain an element in $ S_{n+1}^2(132,213)$ by inserting the maximal element $n+1$ anywhere in $\sigma$ and $\sigma'$. We will assume that $\sigma$ and $\sigma'$ avoid these patterns but $\sigma' \circ \sigma^{-1}$ does not. Recall that that $n+1$ must be inserted right-adjacent to $n$, except in the cases above, where $n+1$ may be inserted at the beginning of the permutation. So let $\sigma \neq \sigma'$, where neither are the identity. 
We have shown above that $\proj(\sigma_L n (n+1) \sigma_R, \sigma_L' n (n+1) \sigma_R')$ contains $\proj(\sigma, \sigma')$. Now for when $\sigma = \sigma'$ or when one of them is the identity permutation, $\proj(\sigma, \sigma')$ cannot contain $132$ or $213$, a contradiction. Inserting $n+1$ anywhere else will contain an occurrence of $132$ or $213$, and hence, inserting the maximal element $n+1$ anywhere in $\sigma$ and $\sigma'$ cannot produce an element in $ S_{n+1}^2(132,213)$.

Hence, we conclude that \begin{align*}
    a_{n+1} = a_n + 3 \cdot 2^{n-1} + 2(n-1). & \qedhere
\end{align*}

\end{proof}

These theorems allow us to enumerate all $3$-permutations avoiding two patterns of size $3$ that correspond to existing OEIS sequences. Moreover, we have since added the sequence in Theorem \ref{132,213} to the OEIS database \cite{oeis}, allowing the complete classification and enumeration of all $3$-permutations avoiding two patterns of size $3$.

\section{Enumeration of Pattern Avoidance Classes of size 3}

Having enumerated all $3$-permutations avoiding two patterns, we now turn our attention to enumerating $3$-permutations avoiding three patterns, as Simion and Schmidt \cite{simion1985restricted} have done with classic permutations. In Table \ref{triple avoidance}, we extend Bonichon and Morel's \cite{bonichon2022baxter} conjectures to $3$-permutations avoiding three patterns of size $3$.

\begin{table}[h]
    \centering
    \begin{tabular}{|c | c | c | c | c|} 
 \hline
 Patterns & \#TWE & Sequence & OEIS Sequence & Comment \\ [0.5ex] 
 \hline\hline
 $123,132,213$ & 3 &  $1,4,2,0,0, \dots$ & & Terminates after $n=3$ \\ 
 \hline
 $123,132,231$ & 4 &  $1,4,3,0,0,\dots$ & & Terminates after $n=3$ \\
 \hline
 $123,231,312$ & 1 & $1,4,0,0,0, \dots$ & & Terminates after $n=2$ \\
 \hline
 $123,231,321$ & 2 & $1,4,3,0,0, \dots$ & & Terminates after $n=3$ \\
 \hline
 $132,213,312$ & 2 & $1,4,6,8,10, \dots$ & \href{http://oeis.org/A005843}{A005843} & Theorem \ref{132,213,312} \\
 \hline
 $132,213,321$ & 1 & $1,4,9,16,25, \dots$ & \href{http://oeis.org/A000290}{A000290} & Theorem \ref{132,213,321} \\
 \hline
 $132,231,312$ & 2 & $1,4,7,10,13, \dots$  & \href{http://oeis.org/A016777}{A016777} & Theorem \ref{132,231,312} \\
 \hline
 $213, 231, 321$ & 4 & $1,4,6,8,10, \dots$ & \href{http://oeis.org/A005843}{A005843} & Theorem \ref{213,231,321} \\
 \hline
 $231,312,321$ & 1 & $1,4,7,19,40, \dots$ & \href{http://oeis.org/A006130}{A006130} & Theorem \ref{231,312,321} \\
 \hline
\end{tabular}
    \caption{Sequences of $3$-permutations avoiding three permutations of size $3$. The second column indicates the number of trivially Wilf-equivalent classes.}
    \label{triple avoidance}
\end{table}

\begin{theorem}\label{132,213,312}
Let $a_n = |S_n^2(132,213,312)|$. Then $a_{n+1}$ follows the formula $a_{n+1} = 2(n+1)$ for $n>0$ (with initial term $a_1=1$).
\end{theorem}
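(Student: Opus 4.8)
First I would pin down the one-dimensional class $S_m^1(132,213,312)$ exactly. Take $\pi\in S_m$ avoiding all three patterns and let $p$ be the position of the entry $m$. Avoiding $132$, with $m$ as the ``$3$'', forces every entry left of position $p$ to exceed every entry right of it, so the prefix occupies the values $\{m-p+1,\dots,m\}$ and the suffix occupies $\{1,\dots,m-p\}$; avoiding $213$, again with $m$ as the ``$3$'', forces the prefix to be increasing, hence equal to the consecutive run $(m-p+1)(m-p+2)\cdots m$; avoiding $312$, now using the leading entry $m-p+1$ as the ``$3$'', forces the suffix to contain no ascent, hence equal to $(m-p)(m-p-1)\cdots 1$. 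Thus $\pi$ is determined by $p$; write $P_p$ for this permutation, so $P_m=\Id_m$ and $P_1=\rev(\Id_m)$, and conversely each $P_p$ is readily checked to avoid the three patterns. In particular $|S_m^1(132,213,312)|=m$ (this is also a Simion--Schmidt result \cite{simion1985restricted}).

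Next, since a $3$-permutation $(\sigma,\sigma')$ avoids a classical pattern exactly when each of its three direct two-dimensional projections $\sigma$, $\sigma'$, $\sigma'\circ\sigma^{-1}$ avoids it, we have $(\sigma,\sigma')\in S_n^2(132,213,312)$ if and only if $\sigma=P_a$, $\sigma'=P_b$ for some $a,b\in[n]$ with $P_b\circ P_a^{-1}\in\{P_1,\dots,P_n\}$. So the theorem reduces to counting such pairs $(a,b)$. Here it is useful to record that in one-line notation $P_a^{-1}=(n,n-1,\dots,a+1,\,1,2,\dots,a)$: a decreasing run of the $n-a$ largest values followed by an increasing run of the $a$ smallest.

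The count then splits into cases. The pairs $(a,a)$ give projection $\Id_n=P_n$: always valid, $n$ pairs. The pairs $(n,b)$ give $\sigma=\Id_n$ and projection $P_b$: always valid, another $n$ pairs, overlapping the previous family only at $(n,n)$. The pairs $(a,n)$ give $\sigma'=\Id_n$ and projection $P_a^{-1}$, which from the displayed form equals some $P_k$ only for $a=1$ (it is $P_1$) or $a=n$ (it is $\Id_n$), since for $1<a<n$ the values $n$, then $1$, then $2$ form a $312$. Finally, for $1\le a,b\le n-1$ with $a\ne b$, a direct piecewise evaluation of $P_b\circ P_a^{-1}$ shows its one-line form begins with the block $1,2,\dots,n-\max(a,b)$, which is nonempty because $\max(a,b)<n$, and places in the next position a value strictly larger than that position's index, so the permutation is not $\Id_n$; as the only $P_k$ whose leading entry is $1$ is $P_n=\Id_n$, none of these pairs is valid. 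Hence the valid pairs are exactly $\{(a,a):a\in[n]\}\cup\{(n,b):1\le b\le n-1\}\cup\{(1,n)\}$, pairwise disjoint for $n\ge 2$, so $a_n=n+(n-1)+1=2n$ for $n\ge 2$; together with $a_1=1$ this gives $a_{n+1}=2(n+1)$ for $n>0$.

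I expect the main obstacle to be the explicit piecewise computation of $P_b\circ P_a^{-1}$ in the last case: one must follow each index through the two runs of $P_a^{-1}$ and then through the two runs of $P_b$, treating $a<b$ and $a>b$ separately, to confirm that the result is forced to be a non-identity permutation with leading entry $1$. Once the shapes of $P_a^{-1}$ and $P_b$ are in hand this is routine bookkeeping, and the payoff of the characterization of $S_m^1(132,213,312)$ is that the whole argument becomes a finite case check rather than an induction.
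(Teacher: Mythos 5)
Your proposal is correct, and I verified the key piecewise computation you defer to the end: with $P_a^{-1}=(n,\dots,a+1,1,\dots,a)$ and $P_b$ as described, one gets $P_b\circ P_a^{-1}(j)=j$ precisely for $j\le n-\max(a,b)$, and at position $j_0=n-\max(a,b)+1$ the value is $n+1-b>j_0$ when $a>b$ and $n>j_0$ when $b>a$, so the projection starts with $1$ but is not $\Id_n$, hence is not any $P_k$; the count $n+(n-1)+1=2n$ follows. This is, however, a genuinely different route from the paper. The paper argues inductively: it takes $\boldsymbol{\sigma}\in S_n^2(132,213,312)$, shows the structural constraints on $(\sigma_L n\sigma_R,\sigma_L' n\sigma_R')$, and analyzes all insertions of $n+1$, concluding that each element extends in exactly one way except the two exceptional elements $(\rev(\Id_n),\rev(\Id_n))$ and $(\Id_n,\rev(\Id_n))$, which extend in two ways, giving $a_{n+1}=a_n+2$. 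You instead characterize the one-dimensional class exactly as the $n$ ``layered'' permutations $P_p$ (an increasing run of the $p$ largest values followed by a decreasing run of the rest), reduce the problem to counting pairs $(a,b)$ with $P_b\circ P_a^{-1}\in\{P_1,\dots,P_n\}$, and obtain $a_n=2n$ by a finite case check with no induction. Your approach buys more: it yields an explicit description of every element of $S_n^2(132,213,312)$ (the pairs $(a,a)$, $(n,b)$, and $(1,n)$), not just the count, which is the kind of structural information the paper's final open problem asks for; the paper's insertion argument, by contrast, fits uniformly into the constructive framework used for all the other theorems in Sections 3--4.
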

\begin{proof}
Let $\boldsymbol{\sigma} = (\sigma, \sigma') \in S_n^2(132,213,312)$. Let $\boldsymbol{\sigma}$ be of the form $(\sigma_L n \sigma_R, \sigma_L' n \sigma_R').$ 

Note that $\sigma_R$ and $\sigma_R'$ have to be either empty or consecutively decreasing, and similarly, $\sigma_L$ and $\sigma_L'$ have to be either empty or consecutively increasing. Moreover, every element in $\sigma_L$ and $\sigma_L'$ must be larger than every element in $\sigma_R$ and $\sigma_R'$, respectively. If not, there would be an occurrence of $132$. 

We have the following cases:
\begin{enumerate}
    \item $\sigma_L$, $\sigma_R$ are nonempty.
    
    If $\sigma_L'$ and $\sigma_R'$ are nonempty, consider $\boldsymbol{\sigma} = (\sigma_L n \sigma_R, \sigma_L' n \sigma_R')$ in $S_{n}^2 (132,213,312)$. Note that inserting $n+1$ right-adjacent to $n$ in both $\sigma$ and $\sigma'$ will avoid $132$, $213$, and $312$. In particular, $(\sigma_L n (n+1) \sigma_R, \sigma_L' n (n+1) \sigma_R')$ avoids $132$, $213$, and $312$. 
    
    Now we show that inserting $n+1$ anywhere else in $\boldsymbol{\sigma}$ does not yield an element of $S_{n+1}^2 (132,213,312)$. Consider $\sigma_L n \sigma_R$. We cannot insert $n+1$ in the beginning of this permutation, or else there would be an instance of $312$. Further, we cannot insert $n+1$ anywhere to the left of $n$, or else there would be an instance of $132$. There would also be an occurrence of $213$ if $n+1$ is inserted anywhere to the right of $n$ that is not adjacent to $n$. 
    
    Hence, $n+1$ is forced to be right-adjacent to $n$ in $\sigma$. The same conclusion follows for $\sigma'$.
    
    If $\sigma_L'$ is empty, then $\sigma' = \rev(\Id_n)$. The projection $\sigma' \circ \sigma^{-1}$ contains an occurrence of $132$, and therefore, this case is impossible. Similarly, if $\sigma_R'$ is empty, then $\sigma' = \Id_n$. Note that the projection $\sigma' \circ \sigma^{-1}$ contains an occurrence of $312$ since $\sigma$ contains an occurrence of $231$, and this case is also impossible.
    
    Therefore, every element in this case contributes $1$ element in $S_{n}^2 (132,213,312)$.
    
    \item $\sigma_L$ is empty.
    
    If both $\sigma_L'$ and $\sigma_R'$ are nonempty, then $\boldsymbol{\sigma} = (\rev(\Id_n), \sigma_L' n \sigma_R')$. Taking the projection $\proj(\rev(\Id_n), \sigma_L' n \sigma_R')$ gives an instance of $132$ because every element in $\sigma_L'$ is larger than every element in $\sigma_R'$. Thus, $\boldsymbol{\sigma}$ is not a valid element in $S_{n}^2 (132,213,312)$, and we conclude this case is impossible.
    
    If $\sigma_L'$ is empty, then we conclude that $(\sigma, \sigma') = (\rev(\Id_n), \rev(\Id_n))$. Following similar logic to the previous case, $n+1$ must be inserted adjacent to $n$ in both $\sigma$ and $\sigma'$ to avoid $312$ and $213$. Note that $((n+1)\rev(\Id_n), (n+1)\rev(\Id_n))$ avoids $132$, $213$, and $312$. However, $\proj((n+1)\rev(\Id_n), (n (n+1) \rev(\Id_{n-1}))$ and $\proj(n(n+1)\rev(\Id_{n-1}), (n+1)\rev(\Id_n))$  cannot avoid $132$, $213$, and $312$. Therefore, $(\rev(\Id_n), \rev(\Id_n))$ contributes one more element to $S_{n+1}^2 (132,213,312)$, in addition to the one obtained by inserting $n+1$ right-adjacent to $n$ in both $\sigma$ and $\sigma'$ as discussed in the previous case.
    
    If $\sigma_R'$ is empty, then $(\sigma, \sigma') = (\rev(\Id_n), \Id_n)$. The element $n+1$ must be inserted adjacent to $n$ in $\sigma$, and $n+1$ must be inserted at the end of $\sigma'$. We can see that the $3$-permutation $((n+1)\rev(\Id_n), \Id_n (n+1))$ is an element of $S_{n+1}^2 (132,213,312)$ and furthermore, the $3$-permutation $(n (n+1) \rev(\Id_{n-1}), \Id_n (n+1))$ is not an element, because its projection $\sigma' \circ \sigma^{-1}$ contains an instance of $312$. 
    
    Hence, each element in this case contributes $1$ element to $S_{n+1}^2 (132,213,312)$, with the exception of $(\rev(\Id_n), \rev(\Id_n))$, which contributes $2$ elements to $S_{n+1}^2 (132,213,312)$.
    
    \item $\sigma_R$ is empty. 
    
    If $\sigma_L'$ is nonempty, then $\boldsymbol{\sigma} = (\Id_n, \sigma_L' n \sigma_R')$. Then $n+1$ is forced to be right-adjacent to $n$ for both $\sigma$ and $\sigma'$, which contributes $1$ element to $S_{n+1}^2 (132,213,312)$.
    
    If $\sigma_L'$ is empty, then $(\sigma, \sigma') = (\Id_n, \rev(\Id_n))$. Note that $(\Id_n(n+1), (n+1)\rev(\Id_n))$ avoids $132$, $213$, and $312$. Hence, the $3$-permutation $(\Id_n, \rev(\Id_n))$ contributes one more element to $S_{n+1}^2 (132,213,312)$ in addition to the one obtained by inserting $n+1$ right-adjacent to $n$ in  $\sigma$ and $\sigma'$.
    
    Hence, each element in this case contributes $1$ element to $S_{n+1}^2 (132,213,312)$, with the exception of $(\Id_n, \rev(\Id_n))$, which contributes $2$ elements to $S_{n+1}^2 (132,213,312)$.

\end{enumerate}

Inserting $n+1$ anywhere else in $\boldsymbol{\sigma}$ cannot provide an element in $S_{n+1}^2 (132,213,312)$.

We show that for $(\sigma, \sigma') \in S_n^2 \setminus S_n^2(132,213,312)$, we cannot obtain an element in $ S_{n+1}^2(132,213,312)$ by inserting the maximal element $n+1$ anywhere in $\sigma$ and $\sigma'$. Let $\sigma$ and $\sigma'$ avoid these patterns but let $\sigma' \circ \sigma^{-1}$ contain them. It is enough to check the cases above:

\begin{enumerate}
    \item $\sigma_L$, $\sigma_R$ are nonempty.
    
    It is straightforward to check that $\proj(\sigma_L n (n+1) \sigma_R, \sigma_L' n (n+1) \sigma_R')$ contains $\sigma' \circ \sigma^{-1}$ and hence, contains an occurrence of $132$, $213$, or $312$. The proof in Case 1 above shows that the maximal element $n+1$ must be inserted right-adjacent to $n$ to avoid an occurrence of these patterns in $\sigma$ and $\sigma'$.

    \item $\sigma_L$ is empty.
    
    Then $(\sigma, \sigma') = (\rev(\Id_n), \sigma_L' n \sigma_R')$. It is also straightforward to check that when the maximal element $n+1$ is inserted adjacent to $n$ in both $\sigma$ and $\sigma'$, the projection of the resulting $3$-permutation contains an occurrence of $132$ if $\sigma_R'$ and $\sigma_L'$ are nonempty. If either $\sigma_R'$ or $\sigma_L'$ are empty, then it is impossible for $\sigma' \circ \sigma^{-1}$ to contain instances of $132$, $213$, $312$, a contradiction. 

    \item $\sigma_R$ is empty. 
    
    Then $(\sigma, \sigma') = (\Id_n, \sigma_L' n \sigma_R')$. The projection of this $3$-permutation is $\sigma'$, and it is impossible for the projection to contain an occurrence of these patterns while $\sigma'$ avoids them. 
\end{enumerate}

Therefore, we have shown $a_{n+1} = a_n +2$. We have the base case $a_2 = 4$, and we then have \begin{align*}
    a_{n+1} = 2(n+1). & \qedhere
\end{align*}
\end{proof}

\begin{theorem}\label{132,213,321}
Let $a_n = |S_n^2(132,213,321)|$. Then $a_{n+1}$ follows the formula $a_{n+1} = (n+1)^2$.
\end{theorem}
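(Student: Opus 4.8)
The plan is to bypass the maximal-element-insertion machinery used for the earlier theorems and instead observe that $S_n^1(132,213,321)$ is a \emph{group} of permutations under composition; this makes the projection condition automatic and the count a perfect square for free.

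First I would pin down $S_n^1(132,213,321)$ explicitly by the position of the maximum. Write $\sigma = \sigma_L\, n\, \sigma_R$. Avoidance of $213$, with $n$ playing the role of the ``$3$'', forces $\sigma_L$ to be increasing; avoidance of $132$, again with $n$ as the ``$3$'', forces every entry of $\sigma_L$ to exceed every entry of $\sigma_R$. Hence $\sigma_L$ is the consecutive increasing run $(|\sigma_R|+1)(|\sigma_R|+2)\cdots(n-1)$ and $\sigma_R$ is an arbitrary element of $S_{|\sigma_R|}^1(132,213)$; a short check shows no occurrence of $132$ or $213$ can straddle the blocks $\sigma_L$, $n$, $\sigma_R$, which also recovers $|S_n^1(132,213)| = 2^{n-1}$. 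Now impose $321$ as well: the prefix $\sigma_L\, n$ is increasing and its entries all exceed those of $\sigma_R$, so any inversion inside $\sigma_R$ together with $n$ gives a $321$; therefore $\sigma_R$ must itself be increasing, i.e.\ $\sigma_R = \Id_{|\sigma_R|}$. Writing $m = |\sigma_R| \in \{0,1,\dots,n-1\}$, this yields
\[
S_n^1(132,213,321) = \{\rho_0,\rho_1,\dots,\rho_{n-1}\},\qquad \rho_m := (m+1)(m+2)\cdots n\,1\,2\cdots m,
\]
i.e.\ $\rho_m$ is the ``rotation by $m$'', $\rho_m(i)\equiv i+m \pmod n$ with values in $[n]$; in particular $|S_n^1(132,213,321)| = n$.

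A one-line computation now gives $\rho_a\circ\rho_b = \rho_{(a+b)\bmod n}$ and $\rho_a^{-1} = \rho_{(n-a)\bmod n}$, so $S_n^1(132,213,321)$ is precisely the cyclic group $\mathbb{Z}/n\mathbb{Z}$ acting by rotation, and in particular is closed under composition and inversion. Recalling that the only dimension-$2$ direct projections of a $3$-permutation $(\sigma,\sigma')$ are $\sigma$, $\sigma'$, and $\sigma'\circ\sigma^{-1}$, we see that $(\sigma,\sigma')$ avoids $\{132,213,321\}$ if and only if all three of these permutations lie in $S_n^1(132,213,321)$. Writing $\sigma = \rho_a$ and $\sigma' = \rho_b$, the projection is $\sigma'\circ\sigma^{-1} = \rho_{(b-a)\bmod n}$, which always lies in the set by closure; hence the projection imposes no constraint at all. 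Therefore $|S_n^2(132,213,321)| = n\cdot n = n^2$, so $a_n = n^2$ and $a_{n+1} = (n+1)^2$, consistent with $a_1 = 1$.

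The only place that needs genuine care is the structural characterization in the second paragraph, namely the verification that no occurrence of $132$, $213$, or $321$ straddles the three blocks $\sigma_L$, $n$, $\sigma_R$; this is a routine finite case check and I expect it to be the main (modest) obstacle. As an alternative one could prove the equivalent recurrence $a_{n+1} = a_n + (2n+1)$ by the insertion method of Theorems \ref{132,231}--\ref{231,321}, but the group-theoretic route is both shorter and conceptually transparent.
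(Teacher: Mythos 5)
Your argument is correct, but it is a genuinely different route from the paper's. The paper proves the recurrence $a_{n+1}=a_n+2n+1$ by its standard insertion machinery: it writes $(\sigma,\sigma')=(\sigma_L n\sigma_R,\sigma_L' n\sigma_R')$, shows the blocks are consecutively increasing with the left blocks' values dominating the right blocks', and then counts, case by case (both right blocks nonempty, exactly one empty, both empty), the ways to insert $n+1$ without creating a forbidden pattern, obtaining contributions $a_n$, $n-1$, $n-1$, and $3$. You instead enumerate the class directly: you characterize $S_n^1(132,213,321)$ as the $n$ cyclic rotations $\rho_m$, observe that these form the group $\mathbb{Z}/n\mathbb{Z}$ under composition, and use the fact that $(\sigma,\sigma')$ avoids the patterns iff $\sigma$, $\sigma'$, and $\sigma'\circ\sigma^{-1}$ all lie in this class, so closure under composition makes the projection condition vacuous and $a_n=n\cdot n=n^2$ falls out immediately. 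Your approach is shorter, gives an explicit description of $S_n^2(132,213,321)$ (all pairs of rotations), and explains structurally why the count is a perfect square, whereas the paper's approach has the virtue of uniformity with its other insertion proofs. One detail you should make explicit is the converse of your structural claim: besides showing that avoidance forces $\sigma=\rho_m$, you need that every $\rho_m$ actually avoids $132$, $213$, and $321$ (immediate, since $\rho_m$ is a concatenation of two increasing blocks with every value of the first block exceeding every value of the second, so its longest decreasing subsequence has length at most $2$ and any candidate $132$ or $213$ occurrence would need an inversion inside a block or a small value in the first block); this is the ``straddling'' check you flagged, and it is needed because the sufficiency direction is exactly what lets you conclude that every pair $(\rho_a,\rho_b)$, together with its projection $\rho_{(b-a)\bmod n}$, is admissible.
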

\begin{proof}
Let $\boldsymbol{\sigma} = (\sigma, \sigma') \in S_n^2(132,213,321)$. Write $(\sigma, \sigma')$ as $(\sigma_L n \sigma_R, \sigma_L' n \sigma_R')$. 

Using a similar reasoning discussed in Theorem \ref{132,213,312}, note that $\sigma_L$, $\sigma_L'$, $\sigma_R$, and $\sigma_R'$ are consecutively increasing. Further, every element in $\sigma_L$ and $\sigma_L'$ is larger than every element in $\sigma_R$ and $\sigma_R'$, respectively.

Using the reasoning in Theorem \ref{132,213,312}, $(\sigma_L n (n+1) \sigma_R, \sigma_L' n (n+1) \sigma_R')$ is in $S_{n+1}^2(132,213,321)$. This contributes $a_n$ different $3$-permutations to $S_{n+1}^2(132,213,321)$. We also have the following cases:

\begin{enumerate}
    \item $\sigma_R$ is empty and $\sigma_R'$ is nonempty.
    
    Note that this implies that $\sigma = \Id_n$ and $\sigma' \neq \Id_n$. Then $((n+1) \Id_n, \sigma_L' n (n+1) \sigma_R')$ avoids $132$, $213$, and $321$.
    
    Inserting $n+1$ anywhere else in $\boldsymbol{\sigma}$ cannot avoid $132$, $213$, and $321$. We must insert $n+1$ right-adjacent to $n$ in $\sigma'$. If $n+1$ is left of $n$, then $\sigma'$ contains an instance of $321$. If $n+1$ is right of $n$ but not adjacent, then $\sigma'$ contains an instance of $213$. 
    
    In $\sigma$, we must either insert $n+1$ at the beginning of the permutation or the end of the permutation. However, inserting $n+1$ at the end of the permutation would correspond to a $3$-permutation we have already considered above.
    
    And hence, $((n+1) \Id_n, \sigma_L' n (n+1) \sigma_R')$ is the only $3$-permutation we can construct in $S_{n+1}^2(132,213,321)$. This case contributes $n-1$ elements to $S_{n+1}^2(132,213,321)$.
    
    \item $\sigma_R$ is nonempty and $\sigma_R'$ is empty.
    
    This implies that $\sigma' = \Id_n$ and $\sigma \neq \Id_n$. Note that $(\sigma_L n (n+1) \sigma_R, (n+1) \Id_n)$ belongs to $S_{n+1}^2(132,213,321)$. Using a similar argument as in Case 1, inserting $n+1$ in $\boldsymbol{\sigma}$ anywhere else does not avoid $132$, $213$, and $312$. Hence, this case contributes $n-1$ different $3$-permutations to $S_{n+1}^2(132,213,321)$.
    
    \item Both $\sigma_R$, $\sigma_R'$ are empty.
    
    This implies that $\sigma = \sigma' = \Id_n$. We see that $((n+1)\Id_n,\Id_n (n+1))$, $((n+1)\Id_n, (n+1)\Id_n)$, and $(\Id_n (n+1), (n+1) \Id_n)$ all avoid $132$, $213$, and $321$. And the same reasoning as in Case 1 shows that inserting $n+1$ anywhere else in $\boldsymbol{\sigma}$ cannot avoid these patterns, and this case contributes $3$ elements to $S_{n+1}^2(132,213,321)$.
\end{enumerate}

Finally, when $\sigma$ and $\sigma'$ are not the identity permutation, then $\sigma_R$ and $\sigma_R'$ are both nonempty, and the same argument in Case 1 shows that inserting $n+1$ anywhere not right-adjacent to $n$ in $\sigma$ and $\sigma'$ cannot avoid $132$, $213$, and $321$. Hence, no other insertions of $n+1$ in $\boldsymbol{\sigma}$ produce an element in $S_{n+1}^2(132,213,321)$. 

We show that for $(\sigma,\sigma') \in S_n^2 \setminus S_{n}^2(132,213,321)$, we cannot obtain an element in $ S_{n+1}^2(132,213,321)$ by inserting the maximal element $n+1$ anywhere in $\sigma$ and $\sigma'$. We assume that $\sigma$ and $\sigma'$ avoid these patterns but $\sigma' \circ \sigma^{-1}$ does not. Write $(\sigma, \sigma') = (\sigma_L n \sigma_R, \sigma_L' n \sigma_R')$. It is straightforward to check that $\proj(\sigma_L n (n+1) \sigma_R, \sigma_L' n (n+1) \sigma_R')$ contains $\sigma' \circ \sigma^{-1}$. Further, we have the same special cases as above:

\begin{enumerate}
    \item $\sigma_R$ is empty and $\sigma_R'$ is nonempty.
    
    Then the projection $\proj(\Id_n, \sigma_L' n \sigma_R')$ is $\sigma_L' n \sigma_R'$, and hence it is impossible for the projection to contain these patterns while $\sigma'$ avoids them.

    \item $\sigma_R$ is nonempty and $\sigma_R'$ is empty.
    
    The projection $\proj(\sigma_L n \sigma_R, \Id_n)$ is of the form $\pi_L \pi_R$, where $\pi_L$ and $\pi_R$ are both consecutively increasing and each element of $\pi_L$ is greater than each element of $\pi_R$. A permutation of this form cannot contain $132$, $213$, or $321$, and it is impossible for $\proj(\sigma_L n \sigma_R, \Id_n)$ to contain these patterns while $\sigma$ avoids them.

    \item Both $\sigma_R$, $\sigma_R'$ are empty.
    
    It is impossible for $\proj(\Id_n, \Id_n)$ to contain these patterns.
\end{enumerate}

Hence, we have $$a_{n+1} = a_n + 2n + 1.$$

The base case is $a_1 = 1$, and we conclude that \begin{align*}
    a_{n+1} = (n+1)^2. & \qedhere
\end{align*}
\end{proof}

\begin{theorem}\label{132,231,312}
Let $a_n = |S_n^2(132,231,312)|$. Then $a_n$ satisfies the recurrence $a_{n+1} = a_n+3$ with initial term $a_1 = 1$.
\end{theorem}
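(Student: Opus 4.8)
The plan is to pin down $S_n^2(132,231,312)$ exactly; the class is rigid enough that the recurrence then drops out immediately. First I would determine the one‑dimensional class $S_n^1(132,231,312)$. Tracking the position of the maximum entry: if it is strictly interior, then the entry immediately to its left, the maximum, and any entry to its right already form a $132$ or a $231$, so the maximum must be first or last. If it is last, the first $n-1$ entries recursively avoid all three patterns; if it is first, avoiding $312$ forces the remaining entries to be decreasing, so the permutation is $\rev(\Id_n)$. Unwinding, $S_n^1(132,231,312)$ consists of exactly the $n$ permutations
\[
\rho_k \;:=\; k\,(k-1)\cdots 2\,1\,(k+1)(k+2)\cdots n \qquad (1\le k\le n),
\]
with $\rho_1=\Id_n$ and $\rho_n=\rev(\Id_n)$; this also follows from Simion and Schmidt \cite{simion1985restricted}. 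Each $\rho_k$ is an involution, since it merely reverses an initial block.

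The crux is deciding, for $\boldsymbol{\sigma}=(\sigma,\sigma')=(\rho_k,\rho_{k'})$, when the projection $\sigma'\circ\sigma^{-1}=\rho_{k'}\circ\rho_k$ is again of the form $\rho_m$ (equivalently, avoids the three patterns). I would compute this composition case by case. If $k=k'$ it equals $\Id_n=\rho_1$; if $k=1$ or $k'=1$ one factor is the identity and it equals $\rho_{k'}$ or $\rho_k$. If $2\le k<k'$, the composition carries positions $1,\dots,k$ to the increasing block $k'-k+1,\dots,k'$ and positions $k+1,\dots,k'$ to the decreasing block $k'-k,\dots,1$, so positions $1,2,k+1$ carry the values $k'-k+1,\ k'-k+2,\ k'-k$, an occurrence of $231$. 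If $2\le k'<k$, the composition begins with the decreasing block $k,\dots,k'+1$ followed by the increasing block $1,\dots,k'$, so position $1$ together with the first two positions of that increasing block carry $k,\ 1,\ 2$, an occurrence of $312$. Hence $(\rho_k,\rho_{k'})\in S_n^2(132,231,312)$ if and only if $k=k'$, $k=1$, or $k'=1$.

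Counting by inclusion–exclusion, the number of pairs $(k,k')\in[n]^2$ with $k=k'$, $k=1$, or $k'=1$ is $3n-(1+1+1)+1=3n-2$, so $a_n=3n-2$ and $a_{n+1}=a_n+3$ with $a_1=1$. Equivalently, in the constructive style of the earlier proofs, deleting the entry $n+1$ from both coordinates maps $S_{n+1}^2(132,231,312)$ onto $S_n^2(132,231,312)$: it is a bijection on the pairs both of whose coordinates end in $n+1$, while the three remaining pairs — those having a coordinate equal to $\rev(\Id_{n+1})$, namely $(\rev(\Id_{n+1}),\rev(\Id_{n+1}))$, $(\Id_{n+1},\rev(\Id_{n+1}))$, and $(\rev(\Id_{n+1}),\Id_{n+1})$ — are each obtained from an already‑hit pair by prepending $n+1$, contributing the extra $3$.

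The main obstacle is the composition analysis in the middle step: composing two initial‑segment reversals produces a word with up to three monotone blocks, and one must track how the two reversal lengths interact carefully enough to be certain that a forbidden pattern appears exactly when $k$ and $k'$ are distinct and both at least $2$, and never otherwise. The structural description of $S_n^1(132,231,312)$ and the final count are routine.
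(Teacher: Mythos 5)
Your argument is correct, and it follows a genuinely different route from the paper. The paper proves the recurrence by the same insertion scheme it uses throughout: starting from an arbitrary $(\sigma_L n \sigma_R,\sigma_L' n \sigma_R')\in S_n^2(132,231,312)$ it shows that appending $n+1$ to both coordinates always works (giving $a_n$), and that any other insertion is possible only in the three exceptional configurations where $\sigma$ or $\sigma'$ equals $\rev(\Id_n)$ (which forces the other coordinate to be $\Id_n$ or $\rev(\Id_n)$), each contributing exactly one extra element. You instead determine the avoidance class completely: using the Simion--Schmidt description $S_n^1(132,231,312)=\{\rho_k\}_{k=1}^n$ and the fact that each $\rho_k$ is an involution (so $\rho_k^{-1}=\rho_k$ and the third direct projection is just $\rho_{k'}\circ\rho_k$), you compute the two- or three-block structure of $\rho_{k'}\circ\rho_k$ and locate an explicit $231$ (when $2\le k<k'$) or $312$ (when $2\le k'<k$), concluding that $(\rho_k,\rho_{k'})$ lies in the class exactly when $k=k'$ or $\min(k,k')=1$; inclusion--exclusion then gives $a_n=3n-2$ and the recurrence is immediate. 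Your block computation checks out (e.g.\ for $k<k'$ positions $1,2,k+1$ carry $k'-k+1,\,k'-k+2,\,k'-k$), and your closing remark correctly reconciles the count with the paper's deletion/insertion picture. What your approach buys is strictly more information -- an explicit list of the $3n-2$ members of $S_n^2(132,231,312)$ and the closed form directly, with the composition analysis replacing the paper's case-by-case insertion checks; what the paper's approach buys is uniformity with the constructive framework used for all the other theorems and no need to compute projections of composed reversals. The only point to keep explicit if you write this up is the converse half of the one-dimensional characterization (that every $\rho_k$ does avoid all three patterns), which you correctly delegate to Simion and Schmidt.
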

\begin{proof}
Let $\boldsymbol{\sigma} = (\sigma, \sigma') \in S_n^2(132,231,312)$. Write $(\sigma, \sigma')$ as $(\sigma_L n \sigma_R, \sigma_L' n \sigma_R')$. Note that $n \sigma_R$ and $n \sigma_R'$ must be consecutively decreasing to avoid $312$ and $132$. 

We insert the maximal element $n+1$ into $\sigma$ and $\sigma'$ to count how many elements in $S_{n+1}^2(132,231,312)$ there are. Note that $(\sigma (n+1), \sigma' (n+1))$ avoids $132$, $231$, and $312$. This contributes $a_n$ different $3$-permutations to $S_{n+1}^2(132,231,312)$. We have the following additional cases:

\begin{enumerate}
    \item $\sigma = \rev(\Id_n)$ and $\sigma' \neq \rev(\Id_n)$.
    
    This forces $\sigma'$ to be the identity. Then $((n+1)\rev(\Id_n), \sigma' (n+1))$ avoids $132$, $231$, and $312$. Now inserting $n+1$ anywhere else in $\boldsymbol{\sigma}$ cannot avoid these patterns. Namely, if $n+1$ is inserted anywhere not in the beginning or end of $\sigma$, there is an occurrence of $231$. Moreover, inserting $n+1$ into the beginning of $\sigma'$ contains $312$. If $n+1$ is inserted anywhere not in the beginning or end of $\sigma'$, there is an occurrence of $132$ in $\sigma'$. Hence, $((n+1)\rev(\Id_n), \sigma' (n+1))$ is the only element we can construct in $S_{n+1}^2(132,231,312)$ in this case. And this case contributes one element to $S_{n+1}^2(132,231,312)$.
    
    \item $\sigma' = \rev(\Id_n)$ and $\sigma \neq \rev(\Id_n)$.
    
    Then similar to Case 1, $(\sigma (n+1), (n+1) \rev(\Id_n))$ avoids $132$, $231$, and $312$, and inserting $n+1$ anywhere else into this $3$-permutation cannot result in a $3$-permutation in $S_{n+1}^2(132,231,312)$. Hence, this case contributes one element to $S_{n+1}^2(132,231,312)$.
    
    \item $\sigma = \sigma' =\rev(\Id_n)$.
    
    Note that $((n+1) \rev(\Id_n), (n+1) \rev(\Id_n))$ avoids $132$, $231$, and $312$. Now we show that no other insertions of $n+1$ into this $3$-permutation results in a $3$-permutation that avoids these patterns. Note that $\proj( \rev(\Id_n) (n+1), (n+1) \rev(\Id_n))$ contains an occurrence of $231$ and further, note that $\proj((n+1) \rev(\Id_n), \rev(\Id_n) (n+1))$ contains an occurrence of $312$. Therefore, this case contributes one element to $S_{n+1}^2(132,231,312)$.
\end{enumerate}

Inserting $n+1$ into $(\sigma, \sigma') = (\sigma_L n \sigma_R, \sigma_L' n \sigma_R')$ anywhere else cannot avoid $132$, $231$, and $312$, where $\sigma, \sigma' \neq \rev(\Id_n)$. More specifically, inserting $n+1$ left-adjacent to $n$ contains $132$ and inserting $n+1$ anywhere to the left of this contains $312$. Further, inserting $n+1$ anywhere to the right of $n$ (but not at the end of the permutation) contains $231$. Hence, we must insert $n+1$ at the end of the permutation, and no other insertions of $n+1$ in $\boldsymbol{\sigma}$ avoid $132$, $231$, and $312$.

Now for $(\sigma,\sigma') \in S_n^2 \setminus S_{n}^2(132,231,312)$, we cannot obtain an element in $ S_{n+1}^2(132,231,312)$ by inserting the maximal element $n+1$ anywhere in $\sigma$ and $\sigma'$. This follows a similar argument to the one presented in Theorem \ref{132,213,321}.

Thus, we conclude that $$a_{n+1} = a_n+3.$$

Since our base case is $a_1 = 1$, this is equivalent to $a_{n+1} = 3n+1$.
\end{proof}

\begin{theorem}\label{213,231,321}
Let $a_n = |S_n^2(213, 231, 321)|$. Then $a_{n+1}$ follows the formula $a_{n+1} = 2(n+1)$ for $n>0$ (with initial term $a_1 = 1$).
\end{theorem}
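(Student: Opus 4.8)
The plan is to exploit the extreme rigidity of $S_n^1(213,231,321)$. The first step I would take is the structural observation that a permutation $\pi\in S_n$ avoids $213$, $231$, and $321$ if and only if deleting its maximal entry $n$ leaves the identity $\mathrm{Id}_{n-1}$; equivalently, $\pi$ is obtained from $\mathrm{Id}_{n-1}$ by inserting $n$ into one of its $n$ slots. One direction is a quick check: a permutation of this shape has exactly one ``large'' entry out of place, so every inversion involves $n$, and this rules out all three length-$3$ patterns. For the converse I would simply invoke Simion and Schmidt's count $|S_n^1(213,231,321)|=n$ \cite{simion1985restricted}: having exhibited $n$ distinct such permutations, there can be no others. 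I would then fix notation $\tau_p$ for the permutation whose entry $n$ sits in position $p$, so that $\tau_n=\mathrm{Id}_n$ and $\tau_p^{-1}$ is the sequence $1,\dots,p-1,p+1,\dots,n,p$ (move $p$ to the end).

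The second step reduces the count to two parameters. Since $\sigma$ and $\sigma'$ must each avoid all three patterns, every $\boldsymbol\sigma\in S_n^2(213,231,321)$ has the form $(\tau_p,\tau_q)$ with $p,q\in[n]$, and the only remaining condition is that the projection $\tau_q\circ\tau_p^{-1}$ also avoids the three patterns, i.e.\ that deleting $n$ from $\tau_q\circ\tau_p^{-1}$ returns $\mathrm{Id}_{n-1}$. Thus $a_n$ equals the number of pairs $(p,q)\in[n]^2$ with this property.

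The third step is to compute $\tau_q\circ\tau_p^{-1}$. Left-composition with $\tau_q$ amounts to relabelling the value $q$ in the one-line notation of $\tau_p^{-1}$ as a new maximum and restandardizing; doing this and then deleting the new maximum gives, in the cases $q=p$, $q<p$, $q>p$ respectively, the sequences
\[
\mathrm{Id}_{n-1},\qquad 1,\dots,p-2,p,p+1,\dots,n-1,p-1,\qquad 1,\dots,p-1,p+1,\dots,n-1,p .
\]
The first is always $\mathrm{Id}_{n-1}$; the second equals $\mathrm{Id}_{n-1}$ only when $p=n$; the third equals $\mathrm{Id}_{n-1}$ only when $p=n-1$, which forces $q=n$. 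Hence the admissible pairs are exactly
\[
\{(p,p):p\in[n]\}\ \sqcup\ \{(n,q):q\in[n-1]\}\ \sqcup\ \{(n-1,n)\},
\]
three families that are pairwise disjoint, having $p=q$, $p>q$, and $p<q$ respectively. For $n\ge 2$ this gives $a_n=n+(n-1)+1=2n$, while $a_1=1$ and $a_2=4$ are read off directly; equivalently $a_{n+1}=2(n+1)$ for all $n>0$.

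I expect the only real work to be the bookkeeping in the third step: writing down the one-line notation of $\tau_q\circ\tau_p^{-1}$ correctly in the subcases $q<p$ and $q>p$, and being careful about the boundary values $p\in\{n-1,n\}$; everything else is a short structural observation or a citation. If one prefers to stay closer to the ``insert the maximal element $n+1$'' template used in the preceding theorems, the same trichotomy reappears: for $\boldsymbol\sigma\in S_n^2$ the element $n+1$ can only be placed at the front of, or immediately after $n$ in, each of $\sigma$ and $\sigma'$, and tracking which of these placements keep the projection admissible yields $a_{n+1}=a_n+2$ for $n\ge 2$ with $a_2=4$.
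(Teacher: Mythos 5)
Your argument is correct, and it takes a genuinely different route from the paper. You classify the avoidance class outright: each coordinate must be $\Id_{n-1}$ with the maximum inserted (your $\tau_p$), so the problem reduces to deciding for which pairs $(p,q)$ the projection $\tau_q\circ\tau_p^{-1}$ is again of this form, and your bookkeeping in the three cases $q=p$, $q<p$, $q>p$ is right (I checked the standardized sequences and the boundary values $p\in\{n-1,n\}$), giving the admissible set $\{(p,p)\}\cup\{(n,q):q<n\}\cup\{(n-1,n)\}$ and hence $a_n=2n$ for $n\ge 2$ in closed form. The paper instead runs its usual insertion recursion: it argues that only the pair $(\Id_n,\Id_n)$ admits any insertion of $n+1$ and then counts its $2(n+1)$ admissible extensions. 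The two proofs are close in spirit (your family of admissible pairs is exactly the set of extensions of $(\Id_n,\Id_n)$ one dimension up), but yours buys an explicit description of every element of $S_n^2(213,231,321)$ and avoids induction entirely, at the cost of the one-line-notation computation of $\tau_q\circ\tau_p^{-1}$; the paper's version stays uniform with the surrounding theorems but leans on a terse claim that all other parents are unextendable.

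One caution about your closing aside: the sketched insertion variant is not quite right as stated. It is false that $n+1$ ``can only be placed at the front of, or immediately after $n$ in,'' each coordinate -- for instance $(\Id_n(n+1),\,1\,2\,(n+1)\,3\cdots n)$ lies in $S_{n+1}^2(213,231,321)$ with $n+1$ in the interior of the second coordinate. What is true (and what your own characterization shows) is that a coordinate obtained by inserting $n+1$ avoids the three patterns only if the original coordinate was $\Id_n$, so only $(\Id_n,\Id_n)$ extends, and one then counts its extensions directly as the paper does; the recursion is better phrased as $a_{n+1}=2(n+1)$ outright rather than $a_{n+1}=a_n+2$ justified element-by-element. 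Since this remark is not used in your main argument, it does not affect correctness.
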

\begin{proof}
Let $\boldsymbol{\sigma} = (\sigma, \sigma') \in S_n^2(213,231,321)$. Writing $(\sigma, \sigma')$ as $(\sigma_L n \sigma_R, \sigma_L' n \sigma_R')$, note that $\sigma_L$, $\sigma_L'$, $\sigma_R$, and $\sigma_R'$ are all consecutively increasing or empty.

We insert the maximal element $n+1$ to $\sigma$ and $\sigma'$ in an attempt to construct an element in $S_{n+1}^2(213,231,321)$. If $\sigma_R$ and $\sigma_R'$ are nonempty, we cannot construct an element of $S_{n+1}^2(213,231,321)$ via insertion because inserting $n+1$ to the left of $n$ contains $321$, inserting $n+1$ right-adjacent to $n$ contains $231$, and inserting $n+1$ anywhere else contains $213$. Then it is enough to consider $(\sigma, \sigma') = (\Id_n, \Id_n)$. We have two cases:

\begin{enumerate}
    \item We insert $n+1$ to the end of $\sigma$. 
    
    Then we can insert $n+1$ anywhere in $\sigma'$ and the resulting $3$-permutation is an element of $S_{n+1}^2(213,231,321)$. This case contributes $n+1$ different elements to $S_{n+1}^2(213,231,321)$.
    
    \item We do not insert $n+1$ to the end of $\sigma$. 
    
    Note that inserting $n+1$ into the same position in $\sigma$ and $\sigma'$ avoids $213$, $231$, and $321$. Further, $(\Id_{n-1} (n+1)n, \Id_{n} (n+1))$ also avoids these patterns. 
    
    Inserting $n+1$ anywhere else contains one of these patterns because the resulting projection contains either $321$ or $231$. Hence, this case contributes $n+1$ different $3$-permutations to $S_{n+1}^2(213,231,321)$.
\end{enumerate}

Now for $(\sigma,\sigma') \in S_n^2 \setminus S_{n}^2(213,231,312)$, we cannot obtain an element in $ S_{n+1}^2(213,231,312)$ by inserting the maximal element $n+1$ anywhere in $\sigma$ and $\sigma'$. It is enough to consider $(\sigma, \sigma') = (\Id_n, \Id_n)$. Note that it is impossible for $\proj(\sigma, \sigma')$ to contain $213$, $231$, or $321$ while $\sigma$ and $\sigma'$ avoid these patterns, and it follows that \begin{align*}
    a_{n+1} = 2(n+1). & \qedhere
\end{align*}
\end{proof}

\begin{theorem}\label{231,312,321}
Let $a_n = |S_n^2(231,312,321)|$. Then $a_n$ satisfies the recurrence $$a_{n+1} = a_n + 3a_{n-1}$$ with initial terms $a_1 = 1$ and $a_2 = 4$.
\end{theorem}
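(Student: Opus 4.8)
The plan is to run the insertion argument of the earlier theorems on top of a structural description of the coordinates. First I would show that a permutation $\tau$ of $[n]$ avoiding $231$, $312$, and $321$, written as $\sigma_L\, n\, \sigma_R$, must have $|\sigma_R|\le 1$: avoidance of $312$ makes $\sigma_R$ increasing and avoidance of $321$ makes it decreasing. If $\sigma_R$ is a single element, avoidance of $231$ forces it to be $n-1$, and hence $\sigma_L$ to be a permutation of $\{1,\dots,n-2\}$. So $\tau$ is of exactly one of two types: \emph{type A}, with $\tau=\sigma_L\, n$ (ending in $n$), or \emph{type B}, with $\tau=\sigma_L\, n\,(n-1)$, where in both cases $\sigma_L$ again avoids the three patterns. (This is the recursion peeling a block of size $1$ or $2$ off the right that underlies the count $1,2,3,5,\dots$ of single permutations avoiding these patterns.) In particular each of $\sigma$, $\sigma'$, and the projection $\sigma'\circ\sigma^{-1}$ of an element of $S_n^2(231,312,321)$ is of type A or type B.

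Next I would determine where the new maximum $n+1$ can be inserted into a single such permutation $\tau$ without creating $231$, $312$, or $321$. If $\tau$ is type B, the only admissible slot is the very end: inserting $n+1$ just right of $n$ gives the occurrence $n,(n+1),(n-1)$ of $231$, just left of $n$ gives $(n+1),n,(n-1)$ of $321$, and any slot further left gives $(n+1),x,n$ with $x<n$, an occurrence of $312$. If $\tau$ is type A, the only two admissible slots are the end, giving $\sigma_L\, n\,(n+1)$, and just left of $n$, giving $\sigma_L\,(n+1)\,n$; any slot further left again yields the $312$ occurrence $(n+1),x,n$. Combined with the facts recorded before Theorem~\ref{132,231} (inserting the maximum into the same position of both coordinates inserts it at the end of the projection; removing the maximum preserves avoidance) and the elementary observation that appending the maximum at the end of a permutation never creates $231$, $312$, or $321$ (in each, the largest entry is not last), this shows: appending $n+1$ to the end of both $\sigma$ and $\sigma'$ always lands in $S_{n+1}^2(231,312,321)$, which accounts for the term $a_n$; and every \emph{other} admissible extension places $n+1$ immediately before $n$ in some coordinate, which is then type A.

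I would then eliminate any type-B coordinate from a non-default extension. If $\sigma$ is type B, then $n+1$ must be appended to $\sigma$, so a non-default extension must place $n+1$ immediately before $n$ in $\sigma'$ (which is type A); writing $\sigma=\sigma_L\, n\,(n-1)$ and $\sigma'=\tau'\, n$ and evaluating the resulting projection at its last three positions gives the values $(n+1,\ \tau'(n-1),\ n)$ with $\tau'(n-1)\le n-1$, an occurrence of $312$ — so it fails, and the mirror case ($\sigma$ type A, $\sigma'$ type B) fails by the symmetry $(\sigma,\sigma')\mapsto(\sigma',\sigma)$, which preserves $S_n^2(231,312,321)$ since $231^{-1}=312$, $312^{-1}=231$, and $321^{-1}=321$ make the pattern set inverse-closed, so by Observation~\ref{inverselemma} (and its $231\leftrightarrow 312$ analogue) a $3$-permutation avoids all three iff its coordinate-swap does. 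Hence a non-default extension forces both coordinates to be type A. For such a pair $(\sigma,\sigma')=(\sigma_L\, n,\ \sigma_L'\, n)$ the candidates beyond the default are: place $n+1$ immediately before $n$ in $\sigma$ only, in $\sigma'$ only, or in both. In each of the three, the new projection is the projection of $(\sigma_L,\sigma_L')$ with $n$ and $n+1$ appended as an adjacent pair in one order or the other, and appending the two largest values as a block at the very end of an avoider of $\{231,312,321\}$ cannot create any of them — so all three are admissible, and each such pair contributes exactly three extra extensions. The pairs with both coordinates type A are exactly the $(\sigma_L\, n,\ \sigma_L'\, n)$ with $(\sigma_L,\sigma_L')\in S_{n-1}^2(231,312,321)$, of which there are $a_{n-1}$ (deleting $n$ from both coordinates, which also deletes it from the end of the projection, is the bijection). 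Since deleting $n+1$ from both coordinates shows every element of $S_{n+1}^2(231,312,321)$ arises from exactly one of these insertions, we get $a_{n+1}=a_n+3a_{n-1}$ for $n\ge 2$, and with $a_1=1$, $a_2=|S_2|^2=4$ this is OEIS A006130.

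The step I expect to be the main obstacle is keeping the case analysis of admissible insertions exhaustive and the projection computations in the type-B and mixed cases correct; the coordinate-swap symmetry and the type A / type B lemma are exactly what make these routine, reducing each verification to a pattern check on a three-element subsequence at the tail of the permutations.
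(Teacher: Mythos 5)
Your proposal is correct and takes essentially the same route as the paper: you insert the maximum $n+1$, the default appending to both coordinates accounts for $a_n$, the pairs with both coordinates ending in $n$ (your type~A/type~A case, in bijection with $S_{n-1}^2(231,312,321)$) admit exactly three further insertions giving $3a_{n-1}$, and all mixed and type~B cases are eliminated by the same tail-of-the-projection computations the paper carries out (your explicit type~A/type~B lemma and the coordinate-swap symmetry via inverse-closedness merely streamline the paper's ``the other case is similar'' steps). One cosmetic slip: in your structural lemma the roles of $312$ and $321$ are interchanged (avoiding $321$ forces $\sigma_R$ to be increasing and avoiding $312$ forces it to be decreasing), which does not affect the conclusion $|\sigma_R|\le 1$.
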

\begin{proof}
Let $\boldsymbol{\sigma} = (\sigma, \sigma') \in S_n^2(231,312,321)$. Write $(\sigma, \sigma')$ as $(\sigma_L n \sigma_R, \sigma_L' n \sigma_R')$.

Note that $(\sigma (n+1), \sigma' (n+1))$ is an element of $S_{n+1}^2(231,312,321)$. This contributes $a_n$ different $3$-permutations to $S_{n+1}^2(231,312,321)$. We consider the following additional case: when $\sigma_R$ and~$\sigma_R'$ are empty.

Then $(\sigma, \sigma') = (\sigma_L n, \sigma_L' n)$, and thus, $(\sigma_L (n+1)n, \sigma_L' n(n+1))$, $(\sigma_L (n+1)n, \sigma_L' (n+1)n)$, and $(\sigma_L n(n+1), \sigma_L' (n+1)n)$ are all elements in $S_{n+1}^2(231,312,321)$. Inserting $n+1$ anywhere else contains an occurrence of $312$. Thus, this case contributes $3a_{n-1}$ distinct $3$-permutations to $S_{n+1}^2(231,312,321)$.

Now when either $\sigma_R$ and $\sigma_R'$ are nonempty, we show that inserting $n+1$ anywhere but the end of the $3$-permutation cannot avoid $231$, $312$, and $321$. Let $\sigma_R$ be nonempty. Then we must insert $n+1$ at the end of $\sigma$; otherwise, inserting $n+1$ to the right of $n$ contains $231$, inserting left-adjacent to $n$ contains $321$, and inserting to the left of $n$ contains $312$. And we evaluate the projection of $(\sigma_L n \sigma_R (n+1), \sigma_L' (n+1) n)$:

\begin{center}
    \begin{tikzpicture}[coo/.style={coordinate}]
	\path (0,0) coordinate (x1) --++ (2,0) coordinate (x2)
			--++ (0.6,0) coordinate (x3)
			--++ (0.6,0) coordinate (x4)
			--++ (0.6,0) coordinate (x5)
			--++ (0.8,0) coordinate (x6)
			--++ (0.8,0) coordinate (x7);
	\foreach \i in {1,2,3}
		\coordinate[] (L\i) at (0,-\i);

	\draw[|-|] (L1-|x1) -- (L1-|x2) node[above,pos=0.5] {$\sigma_L$};
	\node at (L1-|x3) {$n$};
	\node at (L1-|x7) {$n+1$};
	\draw[|-|] (L1-|x4) -- (L1-|x6) node[above,pos=0.5] {$\sigma_R$}; 
	
	\draw[|-|] (L2-|x1) -- (L2-|x5) node[above,pos=0.5] {$\sigma_L'$};
	\node at (L2-|x7) {$n$};
	\node at (L2-|x6) {$n+1$};
		
\end{tikzpicture}
\end{center}

Since $\sigma_R$ is nonempty, this contains an instance of $312$. The case where $\sigma_R'$ is nonempty is similar. Inserting $n+1$ anywhere else in $\boldsymbol{\sigma}$ cannot produce an element in $S_{n+1}^2(231,312,321)$.

Now for $(\sigma,\sigma') \in S_n^2 \setminus S_{n}^2(231,312,321)$, we cannot obtain an element in $ S_{n+1}^2(231,312,321)$ by inserting the maximal element $n+1$ anywhere in $\sigma$ and $\sigma'$. It is straightforward to check that the projections of $(\sigma (n+1), \sigma' (n+1))$, $(\sigma_L (n+1)n, \sigma_L' n(n+1))$, $(\sigma_L (n+1)n, \sigma_L' (n+1)n)$, and $(\sigma_L n(n+1), \sigma_L' (n+1)n)$ contain instances of $\sigma' \circ \sigma^{-1}$ and hence, contain instances of $231$, $312$, or $321$. The proof above shows that inserting $n+1$ anywhere else in $\sigma$ and $\sigma'$ cannot avoid these patterns.

Thus, it follows that \begin{align*}
    a_{n+1} = a_n + 3a_{n-1}. & \qedhere
\end{align*}
\end{proof}

\section{Final Remarks and Open Problems}

In this paper, we completely enumerated $3$-permutations avoiding two patterns of size $3$ and three patterns of size $3$. The theorems in this paper prove all the conjectures by Bonichon and Morel \cite{bonichon2022baxter} regarding $3$-permutations avoiding two patterns of size $3$ and extend their conjectures to classify  $3$-permutations avoiding all classes of three patterns of size $3$. We conclude with the following open problem.

\begin{problem}
Find combinatorial bijections to explain the relationships between the $3$-permutation avoidance classes found in this paper and their recurrence relations.
\end{problem}

For example, we notice that the sequence \href{http://oeis.org/A001787}{A001787} in Theorem \ref{132,231} counts the number of $132$-avoiding permutations of length $n+2$ with exactly one occurrence of a $123$-pattern and the number of Dyck $(n+2)$-paths with exactly one valley at height $1$ and no higher valley \cite{oeis}. In general, the problem of enumerating $d$-permutations avoiding sets of small patterns is widely open. Since several of these enumeration sequences correspond to sequences on the OEIS database \cite{oeis}, there are certainly interesting combinatorial properties of these $3$-permutation avoidance classes, and there are several bijections to find that explain these sequences.

\section*{Acknowledgements}

This research was conducted at the 2022 University of Minnesota Duluth REU and is supported by Jane Street Capital, the NSA (grant number H98230-22-1-0015), the NSF (grant number DMS-2052036), and the Harvard College Research Program. The author thanks the anonymous referees for their helpful feedback and suggestions. He is also indebted to Joe Gallian for his dedication and organization of the University of Minnesota Duluth REU. Lastly, a special thanks to Joe Gallian, Amanda Burcroff, Maya Sankar, and Andrew Kwon for their invaluable advice on this paper.

\section*{Conflict of Interest Statement}
The author states that there is no conflict of interest. The manuscript has no associated data.

\newpage
\bibliography{references}
\bibliographystyle{plain}

$\\ \\ \\$
\end{document}